\newtheorem{theorem}{Theorem}[section]
\newtheorem{lemma}[theorem]{Lemma}
\newtheorem{proposition}[theorem]{Proposition}
\newtheorem{corollary}[theorem]{Corollary}
\theoremstyle{definition}
\theoremstyle{remark}
\newtheorem{remark}[theorem]{Remark}
\numberwithin{equation}{section}
\title[Fock space on $\mathbb{C}^\infty$ and Bose-Fock space]{Fock space on $\mathbb{C}^\infty$ and Bose-Fock space}
\author[B. D. Wick and S. Wu]{Brett D. Wick and Shengkun Wu}
\address{Brett D. Wick, Department of Mathematics and Statistics, Washington University in St. Louis, MO 63130, USA}
\email{ wick@math.wustl.edu}
\address{Shengkun Wu, College of Mathematics and Statistics, Chongqing University, Chongqing, 401331, PR China}
\email{shengkunwu@foxmail.com}
 \keywords{boson Fock space, Fock space}
 \thanks{Brett D. Wick's research is supported in part by a National Science Foundation DMS grants \# 1560955 and \# 1800057 and Australian Research Council -- DP 190100970. Shengkun Wu's research is supported by CSC201906050022.}
 \thanks{}
 \thanks{}
\date{}
\begin{document}

\begin{abstract}In this paper, we introduce the Fock space on $\mathbb{C}^{\infty}$ and obtain an isomorphism between the Fock space on
$\mathbb{C}^{\infty}$ and Bose-Fock space. Based on this isomorphism, we obtain representations of some operators on the Bose-Fock space and answer a question in \cite{coburn1985}.
As a physical application, we study the Gibbs state.
\end{abstract} \maketitle

\section{Introduction}
In \cite{Bargmann}, Bargman introduced the Fock space on $\mathbb{C}^n$ and discussed its connection with quantum mechanics.
In \cite{coburn1985}, Berger and Coburn studied the operators on the Fock space on $\mathbb{C}^n$. In the last section of that paper, the authors asked a question: can the analysis in this paper be applied in the physically interesting case where $\mathbb{C}^{n}$ is replaced by an infinite-dimensional Hilbert space? However, in this paper, we will use $\mathbb{C}^{\infty}$ to replace $\mathbb{C}^{n}$ instead of infinite-dimensional Hilbert space. By this replacement, we will show that the the Fock space on $\mathbb{C}^{\infty}$ is isomorphic to the Bose-Fock space. Then, we are going to generalize some conclusions in \cite{coburn1985} and give a physical application.

The Bose-Fock space is used to describe the states of bosons in quantum mechanics, for the details of this space we refer to \cite{Bartteli}. If $\mathcal{H}$ is a separable Hilbert space, the Full Fock space over $\mathcal{H}$ is the complete tensor algebra over $\mathcal{H}$:
$$
\mathcal{F}(\mathcal{H})=\bigoplus_{k=0}^{\infty}\left(\otimes^{k} \mathcal{H}\right)
$$
where $\otimes^{k} \mathcal{H}$ is the $k$th tensor power of $\mathcal{H}$ for $k \geq 1$ and $\otimes^{0} \mathcal{H}=\mathbb{C} .$
We define the projection on the Full Fock space over $\mathcal{H}$ by
$$P_{+}\left(u_{1} \otimes \cdots \otimes u_{k}\right)=\frac{1}{k !} \sum_{\sigma} u_{\sigma(1)} \otimes \cdots \otimes u_{\sigma(k)},$$
where $\sigma$ ranges over the group of permutations of $k$ letters.
The Bose-Fock space $\mathcal{F}_{+}(\mathcal{H})$ consists of all symmetric tensors, that is to say
$$\mathcal{F}_{+}(\mathcal{H})=P_{+}\bigoplus_{k=0}^{\infty}\left(\otimes^{k} \mathcal{H}\right). $$
 It is easy to verify that if $\left\{h_{j}\right\}$ is an orthonormal basis for $\mathcal{H},$ then
$$\left\{E_{\alpha}=\sqrt{\frac{k !}{\alpha !}} P_{+}\left(h_{1}^{\alpha_{1}} \otimes h_{2}^{\alpha_{2}} \otimes \cdots\right): \quad \alpha=(\alpha_1,\cdots,\alpha_n,\cdots),\sum\alpha_{j}=k, \quad k=0,1,2, \ldots\right\}$$
is an orthonormal basis for $\mathcal{F}_{+}(\mathcal{H}),$ where the superscripts $\alpha_{j}$ denote tensor powers. For any $h\in \mathcal{H}$,
the annihilation operator $a(h)$ and the creation operator $a^*(h)$  on the Fock space over $\mathcal{H}$ are given by
\begin{align*}
&a(h)\left(h_{1} \otimes h_{2} \otimes \cdots \otimes h_{n}\right)=n^{1 / 2}\left(h, h_{1}\right) h_{2} \otimes h_{3} \otimes \cdots \otimes h_{n}\\
&a^{*}(h)\left(h_{1} \otimes h_{2} \otimes \cdots \otimes h_{n}\right)=(n+1)^{1 / 2} h \otimes h_{1} \otimes \cdots \otimes h_{n}.
\end{align*}
By definition, we know that the creation operators map $\otimes^{n} \mathcal{H}$ to $\otimes^{n+1} \mathcal{H}$ and the annihilation operators map $\otimes^{n} \mathcal{H}$ to $\otimes^{n-1} \mathcal{H}$.
The annihilation operator $a_{+}(h)$ and the creation operator $a_{+}^*(h)$  on the Bose-Fock space are given by
$$a_{+}(f)=P_{+} a(f) P_{+}\text{\quad and \quad} a_{+}^{*}(f)=P_{+} a^{*}(f) P_{+},$$
then we have
\begin{equation}\label{creationproperty}
P_{+}(h_1\otimes h_{2} \otimes \cdots h_n)=(n!)^{-1/2}a_{+}^*(h_1)a_{+}^*(h_2)\cdots a_{+}^*(h_n)\Omega,
\end{equation}
where $\Omega=(1,0,0,\cdots)\in \otimes^{n} \mathcal{H}$.
Next, we introduce the Weyl operators on the Bose-Fock space. Let
$$W(h)=\exp \{i \Phi(h)\}$$
be the Weyl operator, where
$$\Phi(h)=2^{-1 / 2} \overline{\left(a_{+}(h)+a_{+}^{*}(h)\right)}.$$
The Weyl algebra $\mathrm{CCR}(\mathcal{H})$  is a $C^*$-algebra generated by
$$\{W(h): h\in \mathcal{H}\},$$
where $\mathrm{CCR}$ stands for the canonical commutation relations, see \cite[pg 10]{Bartteli}.
If $H$ is an unbounded selfadjoint operator on $\mathcal{H}$, one can define $H_{n}$ on $P_{+}\left(\otimes^{k} \mathcal{H}\right)$ by setting $H_{0}=I$ and
$$
H_{n}\left(P_{\pm}\left(f_{1} \otimes \cdots \otimes f_{n}\right)\right)=P_{\pm}\left(\sum_{i=1}^{n} f_{1} \otimes f_{2} \otimes \cdots \otimes H f_{i} \otimes \cdots \otimes f_{n}\right)
$$
for all $f_{i} \in D(H),$ and then extending by continuity. The selfadjoint closure of this sum is called the second quantization of $H$ and is denoted by $d \Gamma(H) .$ Thus
$$
d \Gamma(H)=\overline{\bigoplus_{n \geq 0} H_{n}}.
$$

Let Gaussian measure $d\lambda_n$ on $\mathbb{C}^n$ be given by
$$
d\lambda_n(z)=\frac{1}{{(2\pi)}^{n}}e^{-\frac{|z|^2}{2}}dz.
$$
The Fock space on $\mathbb{C}^n$, denoted by $F^{2}(\mathbb{C}^n,d\lambda_{n})$ or $F^{2}(\mathbb{C}^n)$, consists of all entire functions on $\mathbb{C}^n$ which are square-integrable with respect to $d\lambda_n$.

For any nonnegative integer $k$, let
$$
e_{k}(w)=\sqrt{\frac{1}{2^{k}k !}} w^{k}, w\in\mathbb{C}.
$$
We have $e_1(z)e_k(z)=\sqrt{k+1}e_{k+1}(z)$.
Then the set $\left\{e_{k}\right\}$ is an orthonormal basis for $F^{2}(\mathbb{C},d\lambda_{1})$.

The Gaussian measure can be extended on $\mathbb{C}^\infty$, we denote it by $d\lambda_{\infty}$.
$L^2(\mathbb{C}^\infty,d\lambda_{\infty})$ consists of all square-integrable function on $\mathbb{C}^\infty$ with respect to the infinite dimensional Gaussian measure $d\lambda_{\infty}$. Let $l_j$ be the complex linear functional such that for any $z=(z_1,\cdots,z_j,\cdots)$, we have
$$l_j(z)=z_j.$$
Let $e_{k}\circ l_j$ be a function on $\mathbb{C}^\infty$, such that
$$e_{k}\circ l_j(z)=e_{k}(z_j)=\sqrt{\frac{1}{2^{k}k !}} z_{j}^{k}.$$

The Fock space on $\mathbb{C}^{\infty}$ is defined to be a closed subspace of $L^2(\mathbb{C}^\infty,d\lambda_{\infty})$ generated by the orthonormal set
$$\left\{e_{\alpha_1}\circ l_1\times e_{\alpha_2}\circ l_2\times\cdots\times e_{\alpha_m}\circ l_m\cdots: \quad \sum_{m} \alpha_{m}=k, \quad k=0,1,2, \ldots\right\}$$
and is denoted by $F^2(\mathbb{C}^\infty,d\lambda_{\infty})$.
In fact, $F^2(\mathbb{C}^n)$ can be regarded as a closed subspace of $F^2(\mathbb{C}^\infty,d\lambda_{\infty})$, the embedding is given by
$$f(z_1,\cdots,z_n)\rightarrow f(z_1,\cdots,z_n,0,0,\cdots)\text{, for any }f\in F^2(\mathbb{C}^n). $$
In fact, we have a sequence of embeddings
$$F^2(\mathbb{C}^1)\subset \cdots\subset F^2(\mathbb{C}^n)\subset\cdots\subset F^2(\mathbb{C}^\infty,d\lambda_{\infty}).$$
Let $P_n$ be the projection from $F^2(\mathbb{C}^\infty,d\lambda_{\infty})$ to $F^2(\mathbb{C}^n)$. Since the set of finite polynomials is dense in the Fock space on $\mathbb{C}^{\infty}$, we have
$$\bigcup_{n}F^2(\mathbb{C}^n,d\lambda_{n})\text{ is dense in }F^2(\mathbb{C}^{\infty},d\lambda_{\infty}).$$

We define an isomorphism $B$ from $\mathcal{F}_{+}(\mathcal{H})$ to $F^2(\mathbb{C}^\infty,d\lambda_{\infty})$ such that
$$ B c=c\text{ when }c\in \otimes^{0}\mathcal{H}=\mathbb{C}$$
and
\begin{equation}\label{isomorophism}
B \Big[\sqrt{\frac{k !}{\alpha !}} P_{+}\left(h_{1}^{\alpha_{1}} \otimes \cdots \otimes h_{n}^{\alpha_{n}}\right)\Big]
=e_{\alpha_1}\circ l_1\cdots e_{\alpha_n}\circ l_n
\end{equation}
when $(h_{1}^{\alpha_{1}} \otimes \cdots \otimes h_{n}^{\alpha_{n}})\in \otimes^{k}\mathcal{H}$ with $\sum_{m=1}^{n}\alpha_{m}=k\neq 0$. We need to point out that the isomorphism $B$ depends on the basis $\{h_{j}\}$.
We call $B$ as infinite Bargmann representation.

Let $P$ denote the projection from $L^2(\mathbb{C}^\infty,d\lambda_{\infty})$ to $F^2(\mathbb{C}^\infty,d\lambda_{\infty})$.
We define the Toepltz operator with symbol $\phi\in L^{2}(\mathbb{C}^{\infty},d\lambda_{\infty})$ by
$$T_{\phi}h=P(\phi h)$$
for all $h\in F^{2}(\mathbb{C}^{\infty},d\lambda_{\infty})$ such that $\phi h\in L^{2}(\mathbb{C}^{\infty},d\lambda_{\infty})$. In Section 2, we are going to use some facts of infinite dimensional Gaussian measures to show that the Toeplitz operators on $F^{2}(\mathbb{C}^{\infty},d\lambda_{\infty})$ are unitary equivalent to the annihilation, creation and Weyl operators on the Bose-Fock space, which is the generalization of some conclusions in \cite{coburn1985}. This equivalence can be used to translate some problems in the Bose-Fock space to $F^{2}(\mathbb{C}^{\infty},d\lambda_{\infty})$. Since $\bigcup_{n}F^2(\mathbb{C}^n,d\lambda_{n})\text{ is dense in }F^2(\mathbb{C}^{\infty},d\lambda_{\infty})$, the problems in $F^{2}(\mathbb{C}^{\infty},d\lambda_{\infty})$ can be reduced to  $F^2(\mathbb{C}^n,d\lambda_{n})$. In Section 3, we will use this idea to discuss the a problem in Quantum Statistical Mechanics.

In Section 3, we will study the trace formula which will be applied to the Gibbs state. The Gibbs state of an operator $A$ on the the Bose-Fock space is $$\omega(A)=\frac{\operatorname{Tr}\left(e^{-\beta d \Gamma(H-\mu I)} A\right)}{\operatorname{Tr}\left(e^{-\beta d \Gamma(H-\mu I)}\right)},$$
where $e^{-\beta d \Gamma(H-\mu I)}$ is an operator and its definition will be given in Section \ref{gibbs state}.
The Gibbs state is an important quantity in the Quantum Statistical Mechanics, in fact, it is just the trace of an operator on the Bose-Fock space, see \cite{Bartteli}. Because we know the trace formula in the Fock space on $\mathbb{C}^n$, we can generalize the trace formula and apply it to the Gibbs state.
In the theory of Many-Body Problems, some operators can be represented by the linear combination of products of annihilation and creation operators, see \cite[Chapter 1]{Franz}.
So, it is important to study the product of annihilation and creation operators.
Given $$f^{(1)},\cdots, f^{(m)}, g^{(1)},\cdots, g^{(m)}\in \mathcal{H},$$ we will study the Gibbs state of the operator
$$a_{+}^*(f^{(1)})\cdots a_{+}^*(f^{(m)}) a_{+}(g^{(1)})\cdots a_{+}(g^{(m)}),$$
which is the generalization of \cite[Proposition 5.2.28]{Bartteli}.

In Section 4, we will discuss the relationship between the Fock space on $\mathbb{C}^n$ and the Gaussian Harmonic analysis. In \cite{wu}, the authors gave an isomorphism between the Fock-Sobolev space on $\mathbb{C}^n$ and the Gauss-Sobolev space over $\mathbb{R}^n$. We will generalize this isomorphism in the infinite dimensional case. As an application, we study the boundedness of the annihilation and creation operators.

\section{Annihilation, creation and Weyl operators}
Let $\chi_{k}$ be the subspace of the Fock space on $\mathbb{C}^n$ generated by
$$\left\{e_{\alpha_1}\circ l_1\cdots e_{\alpha_m}\circ l_m\cdots: \quad \sum_{j} \alpha_{j}=k\right\}.$$
Then, for any $h=\sum c_jh_j\in \mathcal{H},$
we have $Bh=\sum_{j} c_je_1\circ l_j.$
By $P_{+}h_m=h_m$ and (\ref{isomorophism}), we have
$$Bh_m(z)=e_1(z_m).$$
Thus, we know that the isomorphism $B$ maps $\mathcal{H}$ to $\chi_{1}$.

Next, we are going to show that the annihilation operators and creation operators are isomorphic to the Toeplitz operators with symbols in $\chi_{1}$.
\begin{proposition}\label{creation}
For any $h\in\mathcal{H}$, let $a_{+}(h)$ and $a_{+}^*(h)$ be an annihilation operator and a creation operator, we have
$$Ba_{+}^*(h)B^{-1}=T_{Bh}\text{\quad and\quad }Ba_{+}(h)B^{-1}=T_{\overline{Bh}}.$$
\end{proposition}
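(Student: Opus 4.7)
My plan is to verify both identities on the dense set of finite linear combinations of the orthonormal basis $\{E_\alpha\}$, whose image under $B$ consists precisely of the monomials $e_{\alpha_1}\circ l_1\cdots e_{\alpha_n}\circ l_n$ generating $F^2(\mathbb{C}^\infty,d\lambda_\infty)$. Since both sides of the first identity are linear in $h$ and both sides of the second are antilinear, it suffices to verify them for the basis vectors $h=h_j$, so that $Bh_j=e_1\circ l_j$.

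For the creation operator identity, formula (\ref{creationproperty}) gives
\[
E_\alpha \;=\; \frac{1}{\sqrt{\alpha!}}\, a_+^*(h_1)^{\alpha_1}\cdots a_+^*(h_n)^{\alpha_n}\,\Omega,
\]
and since creation operators commute on the symmetric Fock space, I would deduce $a_+^*(h_j)E_\alpha = \sqrt{\alpha_j+1}\, E_{\alpha+\delta_j}$, where $\delta_j$ is the multi-index with $1$ in position $j$. On the Toeplitz side, $T_{Bh_j}(BE_\alpha)=P\bigl((e_1\circ l_j)\cdot BE_\alpha\bigr)$; the product is already analytic, so $P$ acts as the identity, and the one-variable relation $e_1(w)e_k(w)=\sqrt{k+1}\,e_{k+1}(w)$ recorded in the introduction gives $\sqrt{\alpha_j+1}\,BE_{\alpha+\delta_j}$. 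The two sides agree.

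For the annihilation operator, taking adjoints in the previous computation yields $a_+(h_j)E_\alpha=\sqrt{\alpha_j}\,E_{\alpha-\delta_j}$ (interpreted as zero when $\alpha_j=0$). To match this, I must compute $T_{\overline{Bh_j}}(BE_\alpha)=P\bigl(\overline{e_1\circ l_j}\cdot BE_\alpha\bigr)$, which involves projecting a non-analytic function and is the only delicate step. Since $\overline{e_1\circ l_j}$ depends only on $z_j$ and $d\lambda_\infty$ is a product measure, I would reduce to the one-variable identity $T_{\overline{e_1}}e_k=\sqrt{k}\,e_{k-1}$ on $F^2(\mathbb{C},d\lambda_1)$, which follows from the adjoint relation $T_{\overline{e_1}}=(T_{e_1})^*$ together with the creation-side computation.

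The main technical point, and the one I expect to require the most care, is justifying that for a symbol of the form $\overline{e_1\circ l_j}$ acting on a polynomial test function the projection $P$ agrees with the finite-dimensional projection $P_n$ once $n$ is chosen large enough to dominate the support of $\alpha$. I would establish this by exploiting the product structure of $d\lambda_\infty$, the inclusion $F^2(\mathbb{C}^n)\subset F^2(\mathbb{C}^\infty,d\lambda_\infty)$, and the density of $\bigcup_n F^2(\mathbb{C}^n)$ recorded in the introduction; granted this, Fubini in the remaining coordinates combined with the one-variable Toeplitz identity yields $\sqrt{\alpha_j}\,BE_{\alpha-\delta_j}$, completing the verification on the algebraic core and, by closability of the operators involved, on the full common domain.
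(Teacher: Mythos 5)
Your proposal is correct and follows essentially the same route as the paper: both verify the creation identity on the basis monomials $BE_\alpha$ using formula (\ref{creationproperty}) together with the one-variable relation $e_1e_k=\sqrt{k+1}\,e_{k+1}$, and both obtain the annihilation identity from the creation one by adjoints. The only cosmetic differences are that the paper carries a general $h=\sum_j c_jh_j$ through the computation rather than reducing to $h=h_j$ by (anti)linearity, and that the paper disposes of $T_{\overline{Bh}}$ purely by taking adjoints of $T_{Bh}$, whereas you additionally verify $T_{\overline{e_1\circ l_j}}$ directly via the product structure of $d\lambda_\infty$ --- a harmless (and slightly more explicit) elaboration of the same step.
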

\begin{proof}
For any $h=\sum c_jh_j\in \mathcal{H},$
we have
$$Bh=\sum c_je_1\circ l_j.$$
Thus, for any $e_{\alpha_1}\circ l_1\cdots e_{\alpha_m}\circ l_m\cdots$ with $\sum \alpha_{j}=k$, we have
\begin{align*}
&B^{-1}T_{Bh}e_{\alpha_1}\circ l_1\cdots e_{\alpha_m}\circ l_m\cdots\\
=&B^{-1}\sum_{j} c_je_1\circ l_j\times e_{\alpha_1}\circ l_1\cdots e_{\alpha_m}\circ l_m\cdots\\
=&B^{-1}\sum_{j} c_j \sqrt{\alpha_{j}+1} e_{\alpha_1}\circ l_1\cdots e_{\alpha_j+1}\circ l_j\cdots e_{\alpha_m}\circ l_m\cdots\\
=&\sum_{j} c_j \sqrt{\alpha_{j+1}} \sqrt{\frac{(k+1) !}{\alpha_1! \cdots (\alpha_{j}+1)! \cdots}} P_{+}\left(h_{1}^{\alpha_{1}}\otimes\cdots \otimes h_{j}^{\alpha_{j}+1} \otimes \cdots\right)\\
=&\sum_{j} c_j  \sqrt{\frac{(k+1) !}{\alpha!}} P_{+}\left(h_{1}^{\alpha_{1}}\otimes\cdots \otimes h_{j}^{\alpha_{j}+1} \otimes \cdots\right).
\end{align*}
On the other hand
\begin{align*}
a_{+}^*(h)B^{-1}e_{\alpha_1}\circ l_1\cdots e_{\alpha_m}\circ l_m\cdots
&=a_{+}^*(h)\sqrt{\frac{k !}{\alpha !}} P_{+}\left(h_{1}^{\alpha_{1}} \otimes h_{2}^{\alpha_{2}} \otimes \cdots\right)\\
&=\sqrt{\frac{1}{\alpha !}}[a_{+}^*(h)a_{+}^*(h_1)^{\alpha_1}\cdots a_{+}^*(h_n)^{\alpha_n}\cdots]\Omega\text{ \big(by (\ref{creationproperty})\big)}\\
&=\sqrt{\frac{(k+1)!}{\alpha !}}P_{+}(h\otimes h_1^{\alpha_1}\otimes \cdots h_n^{\alpha_n}\cdots )\\
&=\sqrt{\frac{(k+1)!}{\alpha !}}\sum c_j P_{+}(h_j\otimes h_1^{\alpha_1}\otimes \cdots h_n^{\alpha_n}\cdots ),\\
\end{align*}
which means that $Ba_{+}^*(h)B^{-1}=T_{Bh}$. Thus we have $Ba_{+}(h)B^{-1}=T_{\overline{Bh}}$ by taking adjoints.
\end{proof}

We are going to give a representation of the Weyl operators, thus we need some facts about infinite dimensional Gaussian measure.
For these facts about infinite dimensional Gaussian measure, we refer to \cite[Chapter 2]{Bogachev}. Because we need a particular theorem in \cite{Bogachev}, we give the details about the general theory.

A Borel probability measure $\gamma$ on $\mathbb{R}^{1}$ is called Gaussian if it is either the Dirac measure $\delta_{a}$ at a point $a$ or has density
$$
p\left(\cdot, a ,\sigma^{2}\right): t \mapsto \frac{1}{\sigma \sqrt{2 \pi}} \exp \left(-\frac{(t-a)^{2}}{2 \sigma^{2}}\right)
$$
with respect to the Lebesgue measure.

Let $X$ be a locally convex space. Let $X^{*}$ be the set of real linear continuous functionals on $X$.
Let us denote by $\mathcal{E}(X, X^{*})$ the minimal $\sigma$-field of subsets of $\Omega,$ with respect to which all functionals $f \in X^{*}$ are measurable.
A probability measure $\gamma$ defined on the $\sigma-$field $\mathcal{E}(X, X^{*})$ is called Gaussian if, for any $f \in X^{*},$ the induced measure  $\gamma \circ f^{-1}$ on $\mathbb{R}^{1}$ is Gaussian. Let
$$a_{\gamma}(f)=\int_{X} f(x) \gamma(d x).$$
We denote by $X_{\gamma}^{*}$ the closure of the set
$$
\left\{f-a_{\gamma}(f), f \in X^{*}\right\}
$$
embedded into $L^{2}(\gamma),$ with respect to the norm of $L^{2}(\gamma) .$
We define
$$
\langle f,g\rangle_{X_{\gamma}^{*}}:=\int_{X}\left[f(x)-a_{\gamma}(f)\right]\left[g(x)-a_{\gamma}(g)\right] \gamma(d x)
$$
For any $x\in X$, let
$$|h|_{ X(\gamma)}=\sup \left\{l(h): l \in X^{*}, \langle l,l\rangle_{X_{\gamma}^{*}} \leq 1\right\}.$$
The space
$$ X(\gamma)=\left\{h \in X:|h|_{X(\gamma)}<\infty\right\}$$
is called the Cameron-Martin space for $X$.

\begin{lemma}[\!\!\protect{\cite[Lemma 2.4.1]{Bogachev}}]\label{represantation}
$A$ vector $x$ in $X$ belongs to the Cameron-Martin space $X(\gamma)$ precisely when there exists $\hat{x} \in X_{\gamma}^{*}$  such that
$f(x)=\langle \hat{x},f\rangle_{X_{\gamma}^{*}} $ for any $f\in X^{*}$.
In this case,
$$
|x|_{X(\gamma)}=\|\hat{x}\|_{L^{2}(\gamma)}.
$$
\end{lemma}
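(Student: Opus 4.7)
The plan is to obtain $\hat{x}$ as the Riesz representative, in the Hilbert space $X_{\gamma}^{*}$, of a suitable bounded linear functional encoding the evaluation map $f\mapsto f(x)$ on $X^{*}$. The equality of norms should then drop out by matching the inequalities obtained in the two directions.

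For the ``if'' direction, suppose $\hat{x}\in X_{\gamma}^{*}$ satisfies $f(x)=\langle\hat{x},f\rangle_{X_{\gamma}^{*}}$ for every $f\in X^{*}$. Cauchy--Schwarz in $X_{\gamma}^{*}$ yields $|l(x)|\le \|\hat{x}\|_{L^{2}(\gamma)}\sqrt{\langle l,l\rangle_{X_{\gamma}^{*}}}$ for every $l\in X^{*}$, and taking the supremum over $\{l\in X^{*}:\langle l,l\rangle_{X_{\gamma}^{*}}\le 1\}$ gives $|x|_{X(\gamma)}\le \|\hat{x}\|_{L^{2}(\gamma)}<\infty$, so $x\in X(\gamma)$.

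For the converse, assume $x\in X(\gamma)$ and let $V=\{f-a_{\gamma}(f):f\in X^{*}\}$, which is dense in $X_{\gamma}^{*}$ by definition. I would define $L\colon V\to\mathbb{R}$ by $L(f-a_{\gamma}(f))=f(x)$, check well-definedness and boundedness with $\|L\|\le |x|_{X(\gamma)}$, extend $L$ by continuity to $X_{\gamma}^{*}$, and invoke Riesz to get $\hat{x}\in X_{\gamma}^{*}$ representing $L$. Boundedness is a rescaling argument: any $f\in X^{*}$ with $\langle f,f\rangle_{X_{\gamma}^{*}}>0$ can be normalized so that the defining supremum for $|x|_{X(\gamma)}$ applies, yielding $|f(x)|\le |x|_{X(\gamma)}\,\|f-a_{\gamma}(f)\|_{L^{2}(\gamma)}$. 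Combining this with the ``if'' direction (applied to the resulting $\hat{x}$) forces equality of norms $\|\hat{x}\|_{L^{2}(\gamma)}=|x|_{X(\gamma)}$.

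The main obstacle is the well-definedness of $L$: if $f-a_{\gamma}(f)=g-a_{\gamma}(g)$ in $L^{2}(\gamma)$, then $h:=f-g\in X^{*}$ is $\gamma$-a.e.\ equal to the constant $a_{\gamma}(f)-a_{\gamma}(g)$, so $\langle h,h\rangle_{X_{\gamma}^{*}}=0$. Since $\lambda h\in X^{*}$ satisfies $\langle\lambda h,\lambda h\rangle_{X_{\gamma}^{*}}=0\le 1$ for every $\lambda\in\mathbb{R}$, the definition of $|x|_{X(\gamma)}$ forces $\lambda h(x)\le |x|_{X(\gamma)}$ for all $\lambda$, which is possible only if $h(x)=0$. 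Thus $f(x)=g(x)$, and $L$ is unambiguous. This subtlety is the only nontrivial point; everything else is a straightforward application of Hilbert space duality on $X_{\gamma}^{*}$.
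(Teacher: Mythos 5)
Your proof is correct: the Riesz-representation argument on the Hilbert space $X_{\gamma}^{*}$, together with the scaling trick that simultaneously gives the bound $|f(x)|\le |x|_{X(\gamma)}\|f-a_{\gamma}(f)\|_{L^{2}(\gamma)}$ and the well-definedness of the functional $f-a_{\gamma}(f)\mapsto f(x)$, is precisely the standard proof, and matching the two inequalities does yield $|x|_{X(\gamma)}=\|\hat{x}\|_{L^{2}(\gamma)}$. Note that the paper supplies no proof of this statement at all --- it is imported verbatim as \cite[Lemma 2.4.1]{Bogachev}, and the argument given there coincides with yours.
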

\begin{lemma}[\!\!\protect{\cite[Lemma 2.4.4]{Bogachev}}]\label{translation}
Let $\gamma$ be a Gaussian measure on a locally convex space $X$. If $x\in X(\gamma)$ and $\hat{x} \in X_{\gamma}^{*}$ satisfy
 $$f(x)=\langle \hat{x},f\rangle_{X_{\gamma}^{*}}$$
for any $f\in X^{*},$ then the measures $\gamma$ and $\gamma_{x}=\gamma(\cdot-x)$ are equivalent and the corresponding Radon-Nikodym density is given by the expression
$$
d\gamma_{x}(z)=\exp \left(\hat{x}(z)-\frac{1}{2}|x|_{X(\lambda)}^{2}\right)d\gamma(z).
$$
\end{lemma}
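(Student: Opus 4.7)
The plan is to identify the measure $\mu$ defined by $d\mu(z):=\exp\!\bigl(\hat{x}(z)-\tfrac{1}{2}|x|_{X(\gamma)}^{2}\bigr)\,d\gamma(z)$ with the translate $\gamma_{x}$ by verifying that their Fourier transforms on $X^{*}$ coincide, and then invoking uniqueness of probability measures on the $\sigma$-field $\mathcal{E}(X,X^{*})$ determined by their characteristic functional.

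First I would check that $\mu$ is a probability measure. By construction, $X_{\gamma}^{*}$ is the $L^{2}(\gamma)$-closure of centered continuous linear functionals, so every element is itself a centered real Gaussian variable on $(X,\gamma)$. In particular $\hat{x}$ is centered Gaussian with variance $\|\hat{x}\|_{L^{2}(\gamma)}^{2}=|x|_{X(\gamma)}^{2}$ by Lemma \ref{represantation}, and the scalar Gaussian Laplace transform gives $\int_{X}\exp(\hat{x}(z))\,d\gamma(z)=\exp(\tfrac{1}{2}|x|_{X(\gamma)}^{2})$, hence $\mu(X)=1$.

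Next, for arbitrary $f\in X^{*}$ I would compute $\widehat{\mu}(f):=\int_{X}e^{if(z)}\,d\mu(z)$. Writing $f=a_{\gamma}(f)+(f-a_{\gamma}(f))$, the centered part lies in $X_{\gamma}^{*}$ and pairs with $\hat{x}$ as a jointly centered Gaussian under $\gamma$. Applying the complex Gaussian moment identity $\mathbb{E}[\exp(iA+B)]=\exp(-\tfrac{1}{2}\mathbb{E}[A^{2}]+i\mathbb{E}[AB]+\tfrac{1}{2}\mathbb{E}[B^{2}])$ with $A=f-a_{\gamma}(f)$ and $B=\hat{x}$, using $\langle f-a_{\gamma}(f),\hat{x}\rangle_{X_{\gamma}^{*}}=f(x)$ from Lemma \ref{represantation}, and then multiplying by $e^{ia_{\gamma}(f)}$ together with the constant prefactor $e^{-|x|_{X(\gamma)}^{2}/2}$ from the definition of $\mu$, yields $\widehat{\mu}(f)=\exp\!\bigl(ia_{\gamma}(f)+if(x)-\tfrac{1}{2}\langle f,f\rangle_{X_{\gamma}^{*}}\bigr)$. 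On the other hand a direct change of variables in $\gamma_{x}$ and the Gaussianity of $f$ under $\gamma$ give $\widehat{\gamma_{x}}(f)=e^{if(x)}\widehat{\gamma}(f)=\exp\!\bigl(ia_{\gamma}(f)+if(x)-\tfrac{1}{2}\langle f,f\rangle_{X_{\gamma}^{*}}\bigr)$. The two agree, so $\mu=\gamma_{x}$, and the explicit density then furnishes the equivalence.

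The principal obstacle is justifying the complex Gaussian moment identity when $\hat{x}$ is only an $L^{2}(\gamma)$-limit of centered elements of $X^{*}$ rather than a continuous linear functional itself. I would handle this by approximating $\hat{x}=\lim_{n}(f_{n}-a_{\gamma}(f_{n}))$ in $L^{2}(\gamma)$, applying the identity to the approximants where it reduces to a finite-dimensional Gaussian computation in the span of $\{f-a_{\gamma}(f),f_{n}-a_{\gamma}(f_{n})\}$, and passing to the limit using the uniform integrability of $\{\exp(f_{n}-a_{\gamma}(f_{n}))\}$ coming from standard Fernique-type Gaussian tail bounds. Everything else is routine bookkeeping with the characteristic functional.
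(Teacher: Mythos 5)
Your argument is correct: it is the standard Cameron--Martin proof via characteristic functionals, and each step checks out --- $\hat{x}$ is a centered Gaussian of variance $|x|_{X(\gamma)}^{2}$ because it is an $L^{2}(\gamma)$-limit of centered elements of $X^{*}$, the joint-Gaussianity of $\{f-a_{\gamma}(f),\hat{x}\}$ inside the Gaussian Hilbert space $X_{\gamma}^{*}$ justifies the complex moment identity, and a probability measure on $\mathcal{E}(X,X^{*})$ is indeed determined by its characteristic functional; positivity of the density a.e.\ gives equivalence. Note, however, that the paper supplies no proof of this lemma at all --- it is quoted directly from \cite[Lemma 2.4.4]{Bogachev} --- so there is nothing internal to compare against; your reconstruction essentially reproduces the argument of the cited source, and the approximation-plus-uniform-integrability device you describe for handling $\hat{x}\notin X^{*}$ is sound (though one can avoid it entirely by observing that any finite family in $X_{\gamma}^{*}$ is automatically jointly Gaussian, so the moment identity applies directly).
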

Next, we discuss a special case when $X=\mathbb{C}^{\infty}$ and $d\gamma=d\lambda_{\infty}$.
Let $\Re l_k(z)=\Re z_k$ and $\Im l_k(z)=\Im z_k.$
Then any real linear functional $f\in(\mathbb{C}^\infty)^{*}$ is a finite linear combination of $\Re l_k$ and $\Im l_k$, see \cite[Theorem 4.3]{Schaefer}. That is to say that
there is a unique sequence $\{f_k=a_k+ib_k\in \mathbb{C}\}$ such that
$$f=\sum_{k=1}^{n}a_k \Re l_k+\sum_{k=1}^{n}b_k \Im l_k=\sum_{k=1}^{n}\Re f_k \Re l_k+\sum_{k=1}^{n}\Im f_k \Im l_k=\Re \sum_{k=1}^{n}\overline{f_k} l_k.$$
Thus, we have
$$a_{d\lambda_{\infty}}(f)=\int_{\mathbb{C}^\infty} f(z) d\lambda_{\infty}(z)=0.$$
For any
$$f=\Re\sum_{k=1}^{n}\overline{f_k} l_k\text{\quad and\quad} g=\Re\sum_{k=1}^{n}\overline{g_k} l_k\in (\mathbb{C}^\infty)^{*},$$
we have
\begin{equation}\label{finite norm}
\langle f,g\rangle_{(\mathbb{C}^{\infty})^{*}_{d\lambda_{\infty}}}=
\int_{\mathbb{C}^{\infty}}f(z)g(z) d\lambda_{\infty}(z)=\sum_{k=1}^{n}(\Re f_k\Re g_k+\Im f_k\Im g_k)=\Re\sum_{k=1}^{n} \overline{f_k} g_k.
\end{equation}
Since $(\mathbb{C}^{\infty})^{*}_{d\lambda_{\infty}}$ is the completion of $(\mathbb{C}^\infty)^{*}$ with respect to the inner product above, we know that, for any $f\in(\mathbb{C}^{\infty})^{*}_{d\lambda_{\infty}}$, there is a sequence $\{f_k\}$ such that
$$f=\lim_{n\rightarrow\infty}\Re \sum_{k=1}^{n}\overline{f_k }l_k,$$
where the limit is taken in $(\mathbb{C}^{\infty})^{*}_{d\lambda_{\infty}}$. We denote by $f=\Re \sum_{k=1}^{\infty}\overline{f_k }l_k$.

If $z$ is in the Cameron-Martin space $\mathbb{C}^{\infty}(d\lambda_{\infty})$ for $\mathbb{C}^{\infty}$, then we have
$$|z|_{ \mathbb{C}^{\infty}(\gamma)}=\sup \left\{l(z): l \in (\mathbb{C}^{\infty})^{*}, \langle l,l\rangle_{(\mathbb{C}^{\infty})^{*}_{d\lambda_{\infty}}} \leq 1\right\}<\infty.$$
Thus, by (\ref{finite norm}), we have
$$|z|_{ \mathbb{C}^{\infty}(\gamma)}=(\sum_{k=1}^{\infty}|z_k|^2)^{1/2}<\infty. $$
Thus, we write $|z|_{ \mathbb{C}^{\infty}(\gamma)}$ as $|z|$.
By (\ref{finite norm}), we know that $\Re\sum_{k=1}^{n}\overline{z_{k}}l_k$ is convergent in $L^2(\mathbb{C}^{\infty},d\lambda_{\infty})$,
then we denote the limit by
\begin{equation}\label{realinnerfunction}
\hat{z}=\Re \sum_{k=1}^{\infty}\overline{z_{k}}l_k.
\end{equation}
Then for any $f=\Re \sum_{k=1}^{n}\overline{f_k }l_k$, we have
$$f(z)=\Re\sum_{k=1}^{n}\overline{f_k}z_k=\Re\langle f,\hat{z}\rangle_{(\mathbb{C}^{\infty})^{*}_{d\lambda_{\infty}}}.$$

By Lemma \ref{represantation}, Lemma \ref{translation} and the argument above, we have the following Lemma.
\begin{lemma}\label{translation2}
Let $d\lambda_{\infty}$ be the Gaussian measure on $\mathbb{C}^{\infty}$.\\
(1) Let $\mathbb{C}^{\infty}(d\lambda_{\infty})$ be the Cameron-Martin space for $\mathbb{C}^{\infty}$, then
$$x=(x_1,\cdots,x_{k},\cdots)\in \mathbb{C}^{\infty}(d\lambda_{\infty})\text{\quad if and only if \quad}|x|=(\sum_{k=1}^{\infty}|x_k|)^{1/2}<\infty.$$
(2) For any $f,g\in(\mathbb{C}^{\infty})^{*}_{d\lambda_{\infty}}$, there are sequences $(f_1,\cdots,f_n,\cdots)$ and $(g_1,\cdots,g_n,\cdots)$ with
$(\sum_{k=1}^{\infty}|f_k|)^{1/2}<\infty$ and $(\sum_{k=1}^{\infty}|g_k|)^{1/2}<\infty$ such that
$$f=\Re\sum_{k=1}^{\infty}\overline{ f_{k}}l_k,\quad g=\Re\sum_{k=1}^{\infty}\overline{ g_{k}}l_k\text{\quad and \quad}
\langle g,f\rangle_{(\mathbb{C}^{\infty})^{*}_{d\lambda_{\infty}}}=\Re\sum_{k=1}^{\infty}\overline{ f_{k}}g_k.$$
(3) For any $x=(x_1,\cdots,x_{k},\cdots)\in \mathbb{C}^{\infty}(d\lambda_{\infty})$, let $\hat{x}=\Re\sum_{k}\overline{x_k}l_{k}\in (\mathbb{C}^{\infty})_{d\lambda_{\infty}}^{*}$, we have
$$f(x)=\langle \hat{x},f\rangle,$$
for any $f\in(\mathbb{C}^{\infty})^{*}$.
Moreover the measures $d\lambda_{\infty}$ and $d\lambda_{\infty,x}=d\lambda_{\infty}(\cdot-x)$ are equivalent and the corresponding Radon-Nikodym density is given by the expression
$$
d\lambda_{\infty,x}(z)=\exp \left(\hat{x}(z)-\frac{1}{2}|x|_{\mathbb{C}^{\infty}(d\lambda_{\infty})}^{2}\right)d\lambda_{\infty}(z).
$$
\end{lemma}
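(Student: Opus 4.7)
The plan is to obtain the three assertions by specializing the general Lemmas \ref{represantation} and \ref{translation} to the case $X=\mathbb{C}^{\infty}$ with $\gamma=d\lambda_{\infty}$, using the explicit computation of $\langle\cdot,\cdot\rangle_{(\mathbb{C}^{\infty})^{*}_{d\lambda_{\infty}}}$ already carried out in the paragraphs preceding the statement. The key ingredients are: the characterization of $f\in(\mathbb{C}^{\infty})^{*}$ as a finite real-linear combination of $\Re l_k$ and $\Im l_k$ via \cite[Theorem 4.3]{Schaefer}, the identity (\ref{finite norm}) for the $L^{2}$-inner product on $(\mathbb{C}^{\infty})^{*}$, and the density of $\bigcup_n(\mathbb{C}^{\infty})^{*}$ in $(\mathbb{C}^{\infty})^{*}_{d\lambda_{\infty}}$.

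For part (1), I would evaluate the supremum in the definition of $|x|_{\mathbb{C}^{\infty}(d\lambda_{\infty})}$. By (\ref{finite norm}) any $l\in(\mathbb{C}^{\infty})^{*}$ with $\langle l,l\rangle_{(\mathbb{C}^{\infty})^{*}_{d\lambda_{\infty}}}\le 1$ has the form $l=\Re\sum_{k=1}^{n}\overline{f_k}l_k$ with $\sum_{k=1}^{n}|f_k|^{2}\le 1$, and then Cauchy--Schwarz gives $l(x)=\Re\sum_{k=1}^{n}\overline{f_k}x_k\le\bigl(\sum_{k=1}^{n}|x_k|^{2}\bigr)^{1/2}$. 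The choice $f_k=x_k/\bigl(\sum_{j\le n}|x_j|^{2}\bigr)^{1/2}$ (truncated) is admissible and attains this bound in the limit $n\to\infty$, yielding $|x|=\bigl(\sum_{k}|x_k|^{2}\bigr)^{1/2}$. In particular, finiteness of the Cameron--Martin norm is equivalent to $x\in\ell^{2}$; if instead $\sum|x_k|^{2}=\infty$, the same truncated functionals furnish an unbounded sequence $l_n(x)\to\infty$.

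For part (2), since $(\mathbb{C}^{\infty})^{*}_{d\lambda_{\infty}}$ is by definition the $L^{2}(d\lambda_{\infty})$-closure of $(\mathbb{C}^{\infty})^{*}$, any $f$ in that closure is an $L^{2}$-limit of functionals $\Re\sum_{k=1}^{n}\overline{f_k^{(n)}}l_k$. Identity (\ref{finite norm}) translates the $L^{2}$-norm into $\bigl(\sum_{k}|f_k^{(n)}|^{2}\bigr)^{1/2}$, so the coefficient sequences form a Cauchy sequence in $\ell^{2}(\mathbb{C})$ and converge to a unique $(f_k)\in\ell^{2}$, giving the limit representation $f=\Re\sum_{k=1}^{\infty}\overline{f_k}l_k$ together with the bilinear formula for $\langle f,g\rangle$ by continuous extension of (\ref{finite norm}).

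For part (3), I would apply Lemma \ref{represantation} to the pair $(x,\hat{x})$ with $\hat{x}$ defined in (\ref{realinnerfunction}): $\hat{x}$ belongs to $(\mathbb{C}^{\infty})^{*}_{d\lambda_{\infty}}$ by (2) since $x\in\ell^{2}$, and the inner product formula proved in (2) gives $\langle\hat{x},f\rangle_{(\mathbb{C}^{\infty})^{*}_{d\lambda_{\infty}}}=\Re\sum_{k}\overline{f_k}x_k=f(x)$ for every $f=\Re\sum\overline{f_k}l_k\in(\mathbb{C}^{\infty})^{*}$. Lemma \ref{translation} then yields the Radon--Nikodym formula with $|x|_{\mathbb{C}^{\infty}(d\lambda_{\infty})}^{2}=\sum|x_k|^{2}$ from part (1). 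The main obstacle is the passage from finite to countable sums in part (1): one must verify that truncation arguments genuinely recover the $\ell^{2}$-norm as the supremum over \emph{all} admissible $l\in(\mathbb{C}^{\infty})^{*}$ (which are only finite combinations), so that the Cameron--Martin space is exactly $\ell^{2}(\mathbb{C})$ with the standard norm, and dually that $\hat{x}$ defined as an $L^{2}(d\lambda_{\infty})$-limit really pairs with every $f\in(\mathbb{C}^{\infty})^{*}$ through the correct integral.
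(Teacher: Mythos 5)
Your proposal is correct and follows essentially the same route as the paper: the paper states Lemma \ref{translation2} as a direct consequence of Lemma \ref{represantation}, Lemma \ref{translation}, and the preceding computation of $\langle\cdot,\cdot\rangle_{(\mathbb{C}^{\infty})^{*}_{d\lambda_{\infty}}}$ via (\ref{finite norm}), which is exactly the specialization you carry out (you also correctly read the norm as the $\ell^{2}$-norm $(\sum_{k}|x_k|^{2})^{1/2}$, fixing the statement's typographical omission of the square). The only difference is that you fill in the Cauchy--Schwarz evaluation of the supremum and the $\ell^{2}$-Cauchy argument explicitly, which the paper leaves implicit.
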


For any $x=(x_1,\cdots,x_n,\cdots)\in \mathbb{C}^{\infty}(d\lambda_{\infty})$, we have $\sum_{k=1}^{n}\overline{x_k}l_k$ is convergent in $F^2(\mathbb{C}^{\infty},d\lambda_{\infty})$. Let $\lim_{n\rightarrow\infty}\sum_{k=1}^{n}\overline{x_k}l_k$ be the limit, we define
\begin{equation}\label{innerproduct}
\langle x,z\rangle=\Big[\lim_{n\rightarrow\infty}\sum_{k=1}^{n}\overline{x_k}l_k\Big](z)
\text{ for } z\in\mathbb{C}^{\infty}\text{ almost everywhere}.
\end{equation}
Thus $\langle x,z\rangle$ is a function in $F^2(\mathbb{C}^{\infty},d\lambda_{\infty})$.
By (\ref{finite norm}) and (\ref{realinnerfunction}), we have
\begin{equation}\label{infinite norm}
\Re\langle x,z\rangle=\hat{x}(z)\text{ almost everywhere and }\|\langle x,z\rangle\|_{(\mathbb{C}^{\infty})_{d\lambda_{\infty}}^{*}}=2|x|=2\|\hat{x}\|_{\mathbb{C}^{\infty}(d\lambda_{\infty})}.
\end{equation}
We define the translation operator $U_x$ on $L^2(\mathbb{C}^{\infty},d\lambda_{\infty})$ by
$$U_xf(z)=f(z-x)e^{\frac{1}{2}\langle x,z\rangle-\frac{1}{4}|x|^{2}}.$$
\begin{remark} In the Fock space on $\mathbb{C}^n$, we can define the reproducing kernel for all $x^{(n)}=\{x_1,\cdots,x_n\}\in\mathbb{C}^{n}$. The reproducing kernel is given by
$$K_{x^{(n)}}(z^{(n)})=e^{\frac{1}{2}\sum_{j}^{n}\overline{x_j}z_j}=e^{\frac{1}{2}\langle x^{(n)},z^{(n)}\rangle_{\mathbb{C}^n}}$$
for any $z^{(n)}\in \mathbb{C}^n$. Since we can't define the inner product of two points in $\mathbb{C}^{\infty}$, we can't define the reproducing kernel for all points in $\mathbb{C}^{\infty}$.
However, for $x$ in the Cameron-Martin space $\mathbb{C}^{\infty}(d\lambda_{\infty})$ and $z\in \mathbb{C}^{\infty}$, we can define $\langle x, z\rangle$ by the limit in $F^2(\mathbb{C}^{\infty},d\lambda_{\infty})$ as in (\ref{innerproduct}). That is the reason that we need the Cameron-Martin space.
So, the analogue of the normalized reproducing kernel in $F^2(\mathbb{C}^{\infty},d\lambda_{\infty})$ is given by
$$k_x(z)=e^{\frac{1}{2}\langle x,z\rangle-\frac{1}{4}|x|^{2}}.$$
On the other hand, by Lemma \ref{translation2}, we know that any $x\in\mathbb{C}^{\infty}(d\lambda_{\infty})$ is a bounded sequence. Thus
$$\mathbb{C}^{\infty}(d\lambda_{\infty})\subset \bigcup_{N} \Big[B(0,N)\times\cdots\times B(0,N)\times\cdots\Big]\subset \mathbb{C}^{\infty},$$
where $B(0,N)$ is a ball in $\mathbb{C}$ with center $0$ and radius $N$. Since $\Big[B(0,N)\times\cdots\times B(0,N)\times\cdots\Big]$ has measure $0$ in $\mathbb{C}^{\infty}$ with respect to the Gaussian measure, we know that $\mathbb{C}^{\infty}(d\lambda_{\infty})$ has measure $0$.

If $x, z\in\mathbb{C}^{\infty}(d\lambda_{\infty})$, we always require
$$\langle x,z\rangle=\lim_{n\rightarrow\infty}\sum_{k=1}^{n}\overline{x_k}z_k.$$
Since $\mathbb{C}^{\infty}(d\lambda_{\infty})$ has measure $0$, this requirement would't change the definition of $\langle x,z\rangle$ as a function in $F^2(\mathbb{C}^{\infty},d\lambda_{\infty})$.
\end{remark}

We are going to study the operators $BW(h)B^{-1}$, so we need a lemma.
\begin{lemma}\label{weyloperator}
For any $x,y\in \mathbb{C}^{\infty}(d\lambda_{\infty})$, we have
$$U_yU_x=e^{-\frac{i}{2}\Im\langle x,y\rangle}U_{x+y}f(z) \text{\quad and \quad}
\|U_xf\|^{2}_{L^2(\mathbb{C}^{\infty},d\lambda_{\infty})}=\|f\|^{2}_{L^2(\mathbb{C}^{\infty},d\lambda_{\infty})}$$
for any $f\in L^{2}(\mathbb{C}^{\infty},d\lambda_{\infty})$.
If $x^{(m)}$ converges to $x$ in $\mathbb{C}^{\infty}(d\lambda_{\infty})$, then $U_{x^{(m)}}$ converges to $U_x$ in the strong operator topology on $F^2(\mathbb{C}^{\infty},d\lambda_{\infty})$.
Moreover, $U_{x}$ is an unitary operator on the Fock space over $\mathbb{C}^{\infty}$, thus $U_{x}f\in F^2(\mathbb{C}^{\infty},d\lambda_{\infty})$ when $f\in F^2(\mathbb{C}^{\infty},d\lambda_{\infty})$.
\end{lemma}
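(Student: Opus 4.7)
The plan is to handle the four assertions in sequence: the first two by direct computation, the last two by a truncation/density argument based on them.

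For the group law, unwind the definition of $U_yU_xf(z)$ to obtain $f(z-x-y)\exp\!\bigl[\tfrac{1}{2}\langle x,z-y\rangle + \tfrac{1}{2}\langle y,z\rangle - \tfrac{1}{4}(|x|^2+|y|^2)\bigr]$. Using linearity of $\langle\cdot,z\rangle$ in its second argument and $|x+y|^2 = |x|^2+|y|^2+2\Re\langle x,y\rangle$, the exponent equals $\tfrac{1}{2}\langle x+y,z\rangle - \tfrac{1}{4}|x+y|^2 - \tfrac{i}{2}\Im\langle x,y\rangle$, which is exactly (1). For the $L^2$-isometry, write $|U_xf(z)|^2 = |f(z-x)|^2 \exp(\hat{x}(z)-\tfrac{1}{2}|x|^2)$ and recognize the exponential factor as the Radon-Nikodym density $d\lambda_{\infty,x}/d\lambda_{\infty}$ from Lemma~\ref{translation2}(3); the change of variables $z\mapsto z+x$ then gives $\|U_xf\|_{L^2}^2 = \|f\|^2$.

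For preservation of the Fock space, first treat truncations $x^{(N)}=(x_1,\ldots,x_N,0,\ldots)$. If $f\in F^2(\mathbb{C}^m)$, the function $U_{x^{(N)}}f$ depends only on the first $\max(m,N)$ coordinates and in those variables is the classical Weyl translation on $F^2(\mathbb{C}^{\max(m,N)})$, which preserves that Fock space. Density of $\bigcup_k F^2(\mathbb{C}^k)$ and the $L^2$-isometry from the previous step show that $U_{x^{(N)}}$ maps $F^2(\mathbb{C}^{\infty})$ into itself. For a general $x\in\mathbb{C}^{\infty}(d\lambda_{\infty})$, the truncations $x^{(N)}$ converge to $x$ in Cameron-Martin norm, so combining the strong convergence (proved next) with closedness of $F^2$ in $L^2$ forces $U_xf\in F^2(\mathbb{C}^{\infty})$. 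Unitarity of $U_x$ on $F^2(\mathbb{C}^{\infty})$ then follows from (1) applied with $y=-x$: since $\Im\langle -x,x\rangle = 0$ we get $U_xU_{-x}=I$, so $U_x^{-1}=U_{-x}$ also preserves $F^2(\mathbb{C}^{\infty})$.

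The main remaining task is the strong continuity statement. Using (1) and the $L^2$-unitarity,
$$\|U_{x^{(m)}}f - U_xf\|^2 = 2\|f\|^2 - 2\Re\bigl(e^{\frac{i}{2}\Im\langle x^{(m)},x\rangle}\langle f,\,U_{x^{(m)}-x}f\rangle\bigr),$$
so it suffices to prove $U_yf\to f$ as $|y|\to 0$. For $f\in F^2(\mathbb{C}^m)$, decompose $y=y^{(m)}+y'$ into its first $m$ coordinates and its tail; then $U_yf$ factors as a finite-dimensional Weyl translate of $f$ (in the variables $z_1,\ldots,z_m$) times the normalized kernel $k_{y'}$ (in the orthogonal tail coordinates). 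The finite part tends to $f$ in $F^2(\mathbb{C}^m)$ by standard Fock space theory on $\mathbb{C}^m$, while $k_{y'}\to 1$ in $L^2$ because $\|k_{y'}\|=1$ and $\langle k_{y'},1\rangle = e^{-|y'|^2/4}\to 1$; since the two factors depend on disjoint coordinates their inner products split, and the product converges in $L^2$. A $3\varepsilon$ argument using the uniform bound $\|U_y\|_{L^2\to L^2}=1$ extends the result to all of $F^2(\mathbb{C}^{\infty})$. I expect the subtlest point to be verifying that $k_{y'}$ actually lies in $F^2(\mathbb{C}^{\infty})$ and that the above factorization is valid in $L^2$; both follow from approximating $y'$ by its truncations and checking that $\{k_{y'^{(N)}}\}$ is Cauchy in $F^2$, which is a direct computation from the explicit formula $\langle k_a,k_b\rangle = e^{\frac{1}{2}\langle a,b\rangle-\frac{1}{4}|a|^2-\frac{1}{4}|b|^2}$ available in each finite-dimensional Fock space.
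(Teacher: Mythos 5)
Your proposal is correct, and its overall skeleton matches the paper's: the $L^2$-isometry via the Cameron--Martin/Radon--Nikodym density of Lemma~\ref{translation2}, the group law by direct computation of the exponent, reduction of the strong convergence $U_{x^{(m)}}\to U_x$ to continuity of $y\mapsto U_y$ at $y=0$, and membership $U_xf\in F^2(\mathbb{C}^{\infty},d\lambda_{\infty})$ by truncating $x$ and using closedness of the Fock space in $L^2$. Where you genuinely diverge is the proof of the central claim that $U_yf\to f$ as $|y|\to 0$. The paper argues by contradiction: it approximates $f$ by a polynomial $p$ in finitely many variables, uses $\|\langle y^{(m)},\cdot\rangle\|_{L^2}=2|y^{(m)}|\to 0$ to extract a subsequence along which $U_{y^{(m_k)}}p\to p$ almost everywhere, and combines this with the norm identity $\|U_{y^{(m_k)}}p\|=\|p\|$ to upgrade to $L^2$ convergence (implicitly invoking the fact that a.e.\ convergence together with convergence of norms gives $L^2$ convergence). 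You instead exploit the product structure of $d\lambda_{\infty}$: for $f$ depending on $z_1,\dots,z_m$ you factor $U_yf=(U_{y^{(m)}}f)\cdot k_{y'}$ with the two factors supported on disjoint coordinate blocks, so that
$$\|U_yf-f\|\le\|f\|\,\|k_{y'}-1\|+\|U_{y^{(m)}}f-f\|,$$
and you close with the explicit computation $\|k_{y'}-1\|^2=2-2e^{-|y'|^2/4}$ plus finite-dimensional strong continuity of Weyl translates. Your route is quantitative and avoids both the contradiction setup and the subsequence extraction; its only external input is strong continuity of $a\mapsto U_a$ on $F^2(\mathbb{C}^m)$, which is classical and could itself be verified by your kernel computation on the dense span of the $k_a$. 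Your polarization identity differs from the computation one would write down only by a harmless complex conjugate inside $\Re(\cdot)$, and your Cauchy-sequence check that $k_{y'}\in F^2(\mathbb{C}^{\infty},d\lambda_{\infty})$ is exactly the point that needs care and is handled correctly.
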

\begin{proof}For any $f\in{L^2(\mathbb{C}^{\infty},d\lambda_{\infty})}$, by Lemma \ref{translation2}, we have
\begin{align*}
\|f\|^2_{L^2(\mathbb{C}^{\infty},d\lambda_{\infty})}&=\int_{\mathbb{C}^{\infty}}|f(z)|^2d\lambda_{\infty}\\
&=\int_{\mathbb{C}^{\infty}}|f(z-x)|^2d\lambda_{\infty,x}\\
&=\int_{\mathbb{C}^{\infty}}|f(z-x)|^2\exp \left(\hat{x}(z)-\frac{1}{2}|x|_{\mathbb{C}^{\infty}(d\lambda_{\infty})}^{2}\right)d\lambda_{\infty}(z)\\
&=\int_{\mathbb{C}^{\infty}}|f(z-x)|^2\exp \left(\Re\langle x,z\rangle-\frac{1}{2}|x|_{\mathbb{C}^{\infty}(d\lambda_{\infty})}^{2}\right)d\lambda_{\infty}(z)\\
&=\int_{\mathbb{C}^{\infty}}|f(z-x)\exp \left(\frac{1}{2}\langle x,z\rangle-\frac{1}{4}|x|_{\mathbb{C}^{\infty}(d\lambda_{\infty})}^{2}\right)|^2 d\lambda_{\infty}(z)\\
&=\int_{\mathbb{C}^{\infty}}|U_xf(z)|^2 d\lambda_{\infty}(z).
\end{align*}
Thus $U_{x}$ is a unitary operator on $L^2(\mathbb{C}^{\infty},d\lambda_{\infty})$.
For any $x,y\in \mathbb{C}^{\infty}(d\lambda_{\infty})$, we have
\begin{align*}
U_yU_xf(z)&=U_y[f(z-x)e^{\frac{1}{2}\langle x,z\rangle-\frac{1}{4}|x|^{2}}]\\
&=f(z-x-y)e^{\frac{1}{2}\langle x,z-y\rangle-\frac{1}{4}|x|^{2}}e^{\frac{1}{2}\langle y,z\rangle-\frac{1}{4}|y|^{2}}\\
&=e^{-\frac{i}{2}\Im\langle x,y\rangle}U_{x+y}f(z).
\end{align*}
Thus $U_{-x}U_x=e^{-\frac{i}{2}\Im\langle x,x\rangle}U_{x-x}=I$, that is to say $U_{-x}=U^{-1}_{x}=U^{*}_{x}.$

For any $x^{(m)},x\in \mathbb{C}^{\infty}(d\lambda_{\infty})$ and $f\in F^2(\mathbb{C}^{\infty},d\lambda_{\infty})$, if $x^{(n)}$ converges to $x$ in $\mathbb{C}^{\infty}(d\lambda_{\infty})$, then
\begin{align*}
&\|U_{x^{(m)}}f-U_{x}f\|_{F^2(\mathbb{C}^{\infty},d\lambda_{\infty})}\\
=&\|e^{-\frac{i}{2}\Im\langle x^{(m)},x\rangle}U_{x^{(m)}-x}f-f\|_{F^2(\mathbb{C}^{\infty},d\lambda_{\infty})}\\
\leq& \|(e^{-\frac{i}{2}\Im\langle x^{(m)},x\rangle}-1)U_{x^{(m)}-x}f\|_{F^2(\mathbb{C}^{\infty},d\lambda_{\infty})}+
\|U_{x^{(m)}-x}f-f\|_{F^2(\mathbb{C}^{\infty},d\lambda_{\infty})}\\
\leq& |e^{-\frac{i}{2}\Im\langle x^{(m)},x\rangle}-1|\|f\|_{F^2(\mathbb{C}^{\infty},d\lambda_{\infty})}+
\|U_{x^{(m)}-x}f-f\|_{F^2(\mathbb{C}^{\infty},d\lambda_{\infty})}.
\end{align*}
On one hand,
$$
\lim_{m\rightarrow\infty}|\Im\langle x^{(m)},x\rangle|=\lim_{m\rightarrow\infty}|\Im\langle x^{(m)}-x,x\rangle|
\leq\lim_{m\rightarrow\infty}| x^{(m)}-x||x|=0.
$$
On the other hand, we claim that if $y^{(m)}\rightarrow0$ in $\mathbb{C}^{\infty}(d\lambda_{\infty})$, then
$$\lim_{m\rightarrow\infty}\|U_{y^{(m)}}f-f\|_{F^2(\mathbb{C}^{\infty},d\lambda_{\infty})}=0.$$
Thus, we obtain
$$\lim_{m\rightarrow\infty}\|U_{x^{(m)}}f-U_{x}f\|_{F^2(\mathbb{C}^{\infty},d\lambda_{\infty})}=0.$$
Next, we prove the claim.
If there is a constant $\epsilon>0$, a function $g\in F^2(\mathbb{C}^{\infty},d\lambda_{\infty})$ and a sequence ${y^{(m)}}$ such that
$$\lim_{m\rightarrow\infty}|y^{(m)}|=0 \text{\quad and \quad} \|U_{y^{(m)}}g-g\|_{F^2(\mathbb{C}^{\infty},d\lambda_{\infty})}>3\epsilon$$
for any $m$. Since $g\in F^2(\mathbb{C}^{\infty},d\lambda_{\infty})$ there is a polynomial $p(z_1,\cdots,z_j)$ such that
$$\|g-p\|_{F^2(\mathbb{C}^{\infty},d\lambda_{\infty})}<\epsilon.$$
Thus, we have
$$\|U_{y^{(m)}}p-p\|_{F^2(\mathbb{C}^{\infty},d\lambda_{\infty})}>\epsilon.$$
By the expression of the $U_{y^{(m)}}$ and $(\ref{infinite norm})$ we have
$$U_{y^{(m)}}p(z)=p(z_{1}-y^{(m)}_{1},\cdots,z_{j}-y^{(m)}_{j})e^{\frac{1}{2}\langle y^{m},z\rangle-\frac{1}{4}|y^{(m)}|^{2}}
\text{\quad and\quad}
\|\langle y^{m},z\rangle\|_{(\mathbb{C}^{\infty})^{*}_{d\lambda_{\infty}}}=2|y^{(m)}|\rightarrow0.
$$
Thus, there is a subsequence $\{y^{(m_k)}\}$ such that $\langle y^{m_{k}},z\rangle$ converges to $0$ almost everywhere.
Then $U_{y^{(m_k)}}p$ converges to $p$ almost everywhere. On the other hand we have
$$\|U_{y^{(m_k)}}p\|_{L^2(\mathbb{C}^{\infty},d\lambda_{\infty})}=\|p\|_{L^2(\mathbb{C}^{\infty},d\lambda_{\infty})},$$
thus
$$\lim_{m\rightarrow\infty}\|U_{y^{(m_k)}}p-p\|_{F^2(\mathbb{C}^{\infty},d\lambda_{\infty})}=0,$$
which is a contradiction. We have completed the proof of the claim.

In the last, we only need to prove that for any polynomial $p(z_1,\cdots,z_j)$, we have
$U_{x}p\in F^2(\mathbb{C}^{\infty},d\lambda_{\infty}).$
For any $l\geq j$, let $x^{(l)}=(x_1,x_2,\cdots,x_l,0,0,\cdots)$, we have
$$U_{x^{(l)}}p(z_1,\cdots,z_j)=p(z_1-x_1,\cdots,z_j-x_j)
e^{\frac{1}{2}\sum_{m=1}^{l}\overline{x_m}z_m+\frac{1}{4}\sum_{m=1}^{l}|x_m|^2}\in F^2(\mathbb{C}^{l},d\lambda_{l}). $$
Thus $U_{x^{(l)}}p\in F^2(\mathbb{C}^{\infty},d\lambda_{\infty})$ for any $l\geq j$. Send $l$ to $\infty$, and we have
$$U_{x}p\in F^2(\mathbb{C}^{\infty},d\lambda_{\infty}).$$
Thus, $U_{x}$ is an unitary operator from $F^2(\mathbb{C}^{\infty},d\lambda_{\infty})$ to $F^2(\mathbb{C}^{\infty},d\lambda_{\infty})$.
\end{proof}
For any $x=(x_1,x_2,\cdots)\in \mathbb{C}^{\infty}(d\lambda_{\infty})$, since $\Re\langle x,z\rangle$ is a real-valued function, we have $T_{\Re\langle x,z\rangle}$ is an unbounded self-adjoint operator. Thus we can define
$e^{i T _{\Re\langle x, z\rangle}}$ by functional calculus.
Let $x^{(n)}=(x_1,\cdots,x_n,0,0,\cdots)$,
then $U_{x^{(n)}}$ is an operator maps $F^2(\mathbb{C}^n,d\lambda_{n})$ to $F^2(\mathbb{C}^n,d\lambda_{n})$. In \cite[Proposition 2 and pg 284]{coburn1985}, the authors have proved that
\begin{equation}\label{three operators}
T_{e^{i\Re\langle x^{(n)}, z\rangle+\frac{|x^{(n)}|^2}{4}}}=e^{i T _{\Re\langle x^{(n)}, z\rangle}}=U_{-i\overline{x^{(n)}}}
\end{equation}
holds on the Fock space on $\mathbb{C}^n$. We can generalize this equality in the Fock space on $\mathbb{C}^{\infty}$.
\begin{theorem}\label{weyl}
For any $h=\sum_{j=1}^{\infty} x_jh_j\in\mathcal{H}$, let $x=(x_1,\cdots,x_n,\cdots)$, we have
$$ BW(h)B^{-1}=e^{i T _{\Re\langle x, z\rangle}}=U_{-i\overline{x}}=T_{e^{i\Re\langle x, z\rangle+\frac{|x|^2}{4}}}$$
holds on the Fock space on $\mathbb{C}^{\infty}$.
Moreover, the Weyl algebra is represented on $F^{2}(\mathbb{C}^{\infty},d\lambda_{\infty})$ as a Toeplitz algebra generated by
$$
\left\{T_{e^{i\Re\langle x, z\rangle+\frac{|x|^2}{4}}}: x\in\mathbb{C}^{\infty}(d\lambda_{\infty})  \right\}.
$$
\end{theorem}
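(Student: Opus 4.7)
The plan is to reduce to the finite-dimensional setting by truncating $h$. Given $h=\sum_j x_j h_j \in \mathcal{H}$, set $h^{(n)}=\sum_{j=1}^n x_j h_j$ and $x^{(n)}=(x_1,\dots,x_n,0,0,\dots)$. Since $h\in\mathcal{H}$ forces $\sum_j|x_j|^2<\infty$, Lemma \ref{translation2}(1) gives $x\in\mathbb{C}^{\infty}(d\lambda_\infty)$ and $|x^{(n)}-x|\to 0$ in the Cameron--Martin norm.

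First I would verify the three-way equality for $h^{(n)}$ in place of $h$. The restriction of $B$ to the symmetric tensors over $\operatorname{span}\{h_1,\dots,h_n\}$ is exactly the classical finite-dimensional Bargmann isomorphism onto $F^2(\mathbb{C}^n)$, so the Berger--Coburn identity (\ref{three operators}) applies and gives
$$BW(h^{(n)})B^{-1}=e^{iT_{\Re\langle x^{(n)},z\rangle}}=U_{-i\overline{x^{(n)}}}=T_{e^{i\Re\langle x^{(n)},z\rangle+|x^{(n)}|^2/4}}$$
on $F^2(\mathbb{C}^n)$. I would then lift this to $F^2(\mathbb{C}^\infty,d\lambda_\infty)$ by exploiting the product structure of $d\lambda_\infty$: since the symbol depends only on $z_1,\dots,z_n$, each of these operators splits as a tensor product of its finite-dimensional counterpart with the identity on the remaining variables. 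Checking this on the orthonormal basis $\{e_{\alpha_1}\circ l_1\cdots e_{\alpha_m}\circ l_m\}$ reduces to a finite computation already handled by Proposition \ref{creation}.

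Next I would pass to the limit $n\to\infty$. Since $\|h-h^{(n)}\|_{\mathcal{H}}\to 0$, the standard strong continuity of the Weyl representation (via Stone's theorem applied to the essentially self-adjoint field operator $\Phi(h)$ on the dense core of finite-particle vectors) gives $W(h^{(n)})\to W(h)$ in the strong operator topology, hence $BW(h^{(n)})B^{-1}\to BW(h)B^{-1}$ strongly on $F^2(\mathbb{C}^\infty,d\lambda_\infty)$. On the other hand, Lemma \ref{weyloperator} yields $U_{-i\overline{x^{(n)}}}\to U_{-i\overline{x}}$ strongly. Equating the two limits identifies $BW(h)B^{-1}$ with $U_{-i\overline{x}}$; the expressions $e^{iT_{\Re\langle x,z\rangle}}$ and $T_{e^{i\Re\langle x,z\rangle+|x|^2/4}}$ are then seen to equal this common operator by the corresponding strong-operator limit of their finite-dimensional forms, which is consistent with the functional calculus for the (unbounded) self-adjoint operator $T_{\Re\langle x,z\rangle}$.

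For the final clause, $\mathrm{CCR}(\mathcal{H})$ is by definition the $C^*$-algebra generated by $\{W(h):h\in\mathcal{H}\}$, so conjugation by the unitary $B$ carries it isomorphically onto the $C^*$-algebra generated by the Toeplitz operators $\{T_{e^{i\Re\langle x,z\rangle+|x|^2/4}}:x\in\mathbb{C}^\infty(d\lambda_\infty)\}$. The main technical obstacle I anticipate is giving rigorous meaning to the Toeplitz operator with symbol $e^{i\Re\langle x,z\rangle+|x|^2/4}$, whose modulus $e^{|x|^2/4}$ need not belong to $L^2(\mathbb{C}^\infty,d\lambda_\infty)$, so the naive Toeplitz definition $T_\phi f = P(\phi f)$ is not directly applicable at the infinite-dimensional level. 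The clean way around this is to use the finite-dimensional identity to pin down $T_{e^{i\Re\langle x^{(n)},z\rangle+|x^{(n)}|^2/4}}$ as a unitary operator, and then \emph{define} the limiting Toeplitz operator through the unitary $U_{-i\overline{x}}$ supplied by Lemma \ref{weyloperator}; the truncation argument guarantees this definition is consistent with the other two equivalent forms.
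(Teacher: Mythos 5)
Your proposal follows essentially the same route as the paper: truncate to $x^{(n)}$, invoke the finite-dimensional Berger--Coburn identity (\ref{three operators}) on $F^2(\mathbb{C}^n)$, pass to strong limits using Lemma \ref{weyloperator} for $U_{-i\overline{x^{(n)}}}$ and the strong continuity of $h\mapsto W(h)$ for the Weyl side, and conclude by density of $\bigcup_n F^2(\mathbb{C}^n,d\lambda_n)$. The only structural difference is that the paper obtains the first equality $BW(h)B^{-1}=e^{iT_{\Re\langle x,z\rangle}}$ directly at the infinite-dimensional level by conjugating the field operator $\Phi(h)$ through Proposition \ref{creation}, rather than by a limit.

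One correction to your final paragraph: the obstacle you anticipate is not actually there, and your proposed workaround would weaken the theorem. Since $x$ lies in the Cameron--Martin space, $|x|^2=\sum_k|x_k|^2$ is a finite constant, so the symbol $e^{i\Re\langle x,z\rangle+|x|^2/4}$ has constant modulus $e^{|x|^2/4}$; it is therefore a bounded function, hence belongs to $L^2(\mathbb{C}^\infty,d\lambda_\infty)$ of the probability measure $d\lambda_\infty$, and the paper's definition $T_\phi f=P(\phi f)$ applies verbatim and yields a bounded operator. You should not \emph{define} the limiting Toeplitz operator to be $U_{-i\overline{x}}$ --- that makes the third equality true by fiat rather than a statement with content. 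Instead, prove the convergence of the Toeplitz operators themselves: $\|\langle x^{(m)},z\rangle-\langle x,z\rangle\|_{L^2}\to 0$, so along a subsequence the symbols converge almost everywhere while remaining uniformly bounded by $e^{|x|^2/4}$, and dominated convergence gives $T_{e^{i\Re\langle x^{(m_k)},z\rangle+|x^{(m_k)}|^2/4}}f\to T_{e^{i\Re\langle x,z\rangle+|x|^2/4}}f$ for each $f$ in the dense subspace. This is exactly how the paper closes the argument.
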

\begin{proof}
Since the Weyl algebra is generated by Weyl operators, we only need to prove the equality.

By the definition of the isomorphism $B$, we have $Bh=\sum_{j=1}^{\infty} x_je_1\circ l_j.$
Since $\sum_{j}|x_j|^2< \infty$, we have $x\in \mathbb{C}^{\infty}(d\lambda_{\infty})$.
Then $\langle x,z \rangle$ is well-defined and
$$\Re \langle x,z \rangle= \Re \sum_{j=1}^{\infty} x_j  l_j=\sqrt{2}\Re \sum_{j=1}^{\infty} x_je_j\circ l_j
=2^{-1/2}(\overline{\sum_{j=1}^{\infty} x_je_j\circ l_j}+\sum_{j=1}^{\infty} x_je_j\circ l_j).$$
Moreover, by Proposition \ref{creation}, we have
 $Ba_{+}^*(h)B^{-1}=T_{Bh}$ and $Ba_{+}(h)B^{-1}=T_{\overline{Bh}}$.
 Thus
$$BW(h)B^{-1}=Be^{i2^{-1 / 2} \overline{(a_{+}(h)+a_{+}^{*}(h))}}B^{-1}=e^{i2^{-1 / 2}
\Big(T_{\overline{\sum_{j=1}^{\infty} x_je_1\circ l_j}}+T_{\sum_{j=1}^{\infty} x_je_1\circ l_j}\Big)}
=e^{i T _{\Re\langle x, z\rangle}}.$$
Let $x^{(m)}=(x_1,\cdots,x_m,0,0,\cdots)$, by (\ref{three operators}), we have
$$T_{e^{i\Re\langle x^{(m)}, z\rangle+\frac{|x^{(m)}|^2}{4}}}f=e^{i T _{\Re\langle x^{(m)}, z\rangle}}f=U_{-i\overline{x^{(m)}}}f$$
for any $m,n\in \mathbb{N}$ with $m\geq n$ and $f\in F^2(\mathbb{C}^{n},d\lambda_{n})\subseteq F^2(\mathbb{C}^{m},d\lambda_{m}).$
For the operator $T_{e^{i\Re\langle x^{(m)}, z\rangle+\frac{|x^{(m)}|^2}{4}}}$, since $\lim_{m\rightarrow\infty}\|x^{(m)}-x\|_{\mathbb{C}^{\infty}(d\lambda_{\infty})}=0$, we have
$$ \lim_{m\rightarrow\infty}\|\langle x^{(m)},z\rangle-\langle x,z\rangle\|_{L^2(\mathbb{C}^{\infty},d\lambda_{\infty})}=0.$$
There is a subsequence $m_{k}$ such that
$$\lim_{k\rightarrow\infty}\langle x^{(m_k)},z\rangle=\langle x,z\rangle\text{\quad almost everywhere}.$$
Thus we have
$$\lim_{k\rightarrow\infty} T_{e^{i\Re\langle x^{(m_k)}, z\rangle+\frac{|x^{(m_k)}|^2}{4}}}f=\lim_{k\rightarrow\infty} T_{e^{i\Re\langle x, z\rangle+\frac{|x|^2}{4}}}f.$$
Let $h^{(m)}=\sum_{j=1}^{m} x_jh_j$, by \cite[Proposition 5.2.4]{Bartteli}, we have $W(h^{(m)})$ converges to $W(h)$ in the strong operator topology.
Thus
$$\lim_{m\rightarrow\infty}e^{i T _{\Re\langle x^{(m)}, z\rangle}}f=e^{i T _{\Re\langle x, z\rangle}}f.$$
By Proposition \ref{weyloperator}, we have
$$\lim_{m\rightarrow\infty}U_{-i\overline{x^{(m)}}}f=U_{-i\overline{x}}f.$$
Then, we have
$$e^{i T _{\Re\langle x, z\rangle}}f=U_{-i\overline{x}}f=T_{e^{i\Re\langle x, z\rangle+\frac{|x|^2}{4}}}f$$
for any $f\in F^2(\mathbb{C}^{n},d\lambda_{n})$.
We have completed the proof because $\bigcup_{n}F^2(\mathbb{C}^{n},d\lambda_{n})$ is dense in $F^2(\mathbb{C}^{\infty},d\lambda_{\infty})$.
\end{proof}

\section{The Gibbs state and trace formula}\label{gibbs state}
Next, we study the second quantization and the Gibbs state. Given an unbounded self-adjoint operator $H$ on the Hilbert space $\mathcal{H}$, we have defined $d \Gamma(H)$, thus $e^{it d\Gamma(H)}$ is an unitary operator for any $t$. Let $U_t=e^{itH}$, then $U_t$ is an unitary operator on $\mathcal{H}$.
By \cite[pg 8]{Bartteli}, we know that
\begin{equation}\label{unitarygroup}
e^{it d\Gamma(H)} P_{+}(f_{1} \otimes  \cdots \otimes f_n)=P_{+}(e^{itH} f_{1} \otimes \cdots \otimes e^{itH}f_n).
\end{equation}
Since $d\Gamma(H)$ is a self-adjoint operator, we know that $e^{it d\Gamma(H)}$ is a bounded operator.

Let $\beta$ be a constant such that $\beta H$ is a positive operator, then $\beta d\Gamma(H)$ is a positive operator. Similarly, the operator $e^{-\beta d\Gamma(H)}$ is given by
\begin{equation}\label{oneparametergroup}
e^{-\beta d\Gamma(H)} P_{+}(f_{1} \otimes\cdots\otimes f_n)=P_{+}(e^{-\beta H} f_{1} \otimes \cdots \otimes e^{-\beta H}f_n).
\end{equation}
Since $\beta d\Gamma(H)$ is a positive operator, we know that $e^{-\beta d\Gamma(H)}$ is a bounded operator.

Because there is no proof of (\ref{unitarygroup}) and (\ref{oneparametergroup}) in \cite{Bartteli}, we give a simple proof here.

Let $U_t$ be an operator such that
$$U_tP_{+}( f_{1} \otimes \cdots \otimes f_n)= P_{+}(e^{itH} f_{1} \otimes \cdots \otimes e^{itH}f_n).$$
It is easy to see that $U_t$ is a one-parameter group of unitary operators and its generator is $d\Gamma(H)$, by \cite[Theorem
$\uppercase\expandafter{\romannumeral8}.8$] {Reed} we have
$$U_t=e^{it d\Gamma(H)}.$$
The proof for (\ref{oneparametergroup}) is similar, we only need to apply \cite[Problem 39, pg 315]{Reed}.

Recall that, for any $j\in\mathbb{N}$, $l_{j}$ is a function in $F^2(\mathbb{C}^{\infty},d\lambda_{\infty})$ such that $l_{j}(z)=z_j$.
Since $B$ is an isomorphism from $\mathcal{H}$ to $\chi_1$,
we know that $Be^{itH}B^{-1}$ is an operator on $\chi_1$ which is generated by $\{l_{j}\}$.
\begin{theorem}\label{second quantization}
For any self-adjoint operator $H$ on $\mathcal{H}$, we have
$$Be^{it d\Gamma(H)}B^{-1}f(z)=f(Be^{itH}B^{-1}l_1(z),\cdots,Be^{itH}B^{-1}l_n(z),\cdots)$$
for any $f\in \bigcup_{n}F^2(\mathbb{C}^{n},d\lambda_{n})$. Let $\beta$ be a constant such that $\beta H$ is a positive operator, then we have
$$Be^{-\beta d\Gamma(H)}B^{-1}f(z)=f(Be^{-\beta H}B^{-1}l_1(z),\cdots,Be^{-\beta H}B^{-1}l_n(z),\cdots)$$
for any $f\in \bigcup_{n}F^2(\mathbb{C}^{n},d\lambda_{n})$.
\end{theorem}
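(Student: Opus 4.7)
The plan is to reduce the identity to a dense family of basis elements, compute there using the multiplicative structure of $B$, and extend by boundedness. I treat the unitary version first; the positive-semigroup version is identical, using (\ref{oneparametergroup}) in place of (\ref{unitarygroup}) and the boundedness of $e^{-\beta d\Gamma(H)}$ (guaranteed by $\beta H \geq 0$) for the closing density argument.

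The crucial preliminary is a product identity essentially built into the definition of $B$: for any $g_{1},\dots,g_{n}\in\mathcal{H}$,
$$B\,P_{+}(g_{1}\otimes\cdots\otimes g_{n})=\frac{1}{\sqrt{n!}}\,(Bg_{1})(Bg_{2})\cdots(Bg_{n})$$
as a pointwise product in $F^{2}(\mathbb{C}^{\infty},d\lambda_{\infty})$. I will derive this by iterating Proposition \ref{creation} and (\ref{creationproperty}): $B\sqrt{n!}\,P_{+}(g_{1}\otimes\cdots\otimes g_{n})=T_{Bg_{1}}\cdots T_{Bg_{n}}B\Omega=T_{Bg_{1}}\cdots T_{Bg_{n}}\cdot 1$; since each intermediate product of linear functions $Bg_{j}$ already lies in $F^{2}(\mathbb{C}^{\infty},d\lambda_{\infty})$, the projections act as the identity and the iterated Toeplitz operators collapse to pointwise multiplication. (One could equivalently verify this directly on the basis $g_{j}=h_{i_{j}}$ using (\ref{isomorophism}) and extend multilinearly.)

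For a basis element $f=e_{\alpha_{1}}\circ l_{1}\cdots e_{\alpha_{n}}\circ l_{n}$ with $\sum_{j}\alpha_{j}=k$, (\ref{isomorophism}) gives $B^{-1}f=\sqrt{k!/\alpha!}\,P_{+}(h_{1}^{\alpha_{1}}\otimes\cdots\otimes h_{n}^{\alpha_{n}})$. Applying (\ref{unitarygroup}) and then $B$ together with the product identity yields
$$Be^{itd\Gamma(H)}B^{-1}f=\sqrt{\frac{k!}{\alpha!}}\,BP_{+}\bigl((e^{itH}h_{1})^{\alpha_{1}}\otimes\cdots\otimes(e^{itH}h_{n})^{\alpha_{n}}\bigr)=\frac{1}{\sqrt{\alpha!}}\prod_{j=1}^{n}\bigl(Be^{itH}h_{j}\bigr)^{\alpha_{j}}.$$
Now $Bh_{j}=e_{1}\circ l_{j}=l_{j}/\sqrt{2}$, so by linearity of $Be^{itH}B^{-1}$ on $\chi_{1}$ one has $Be^{itH}h_{j}=Be^{itH}B^{-1}l_{j}/\sqrt{2}$. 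Substituting and invoking $e_{\alpha_{j}}(w)=w^{\alpha_{j}}/\sqrt{2^{\alpha_{j}}\alpha_{j}!}$, the factors $1/\sqrt{\alpha!}$ and $1/2^{k/2}$ exactly cancel the normalization in $\prod_{j}e_{\alpha_{j}}$, leaving
$$Be^{itd\Gamma(H)}B^{-1}f=\prod_{j=1}^{n}e_{\alpha_{j}}\bigl(Be^{itH}B^{-1}l_{j}\bigr)=f\bigl(Be^{itH}B^{-1}l_{1},\dots,Be^{itH}B^{-1}l_{n}\bigr),$$
which is the claimed right-hand side.

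To finish, expand an arbitrary $f\in F^{2}(\mathbb{C}^{n},d\lambda_{n})$ as $f=\sum_{\alpha}c_{\alpha}\prod_{j}e_{\alpha_{j}}\circ l_{j}$, convergent in $L^{2}$. Since $Be^{itd\Gamma(H)}B^{-1}$ is unitary, it applies term by term with $L^{2}$-convergent partial sums, and by the basis step these partial sums equal $\sum_{\alpha}c_{\alpha}\prod_{j}e_{\alpha_{j}}(Be^{itH}B^{-1}l_{j})$. Standard Fock-space reproducing-kernel estimates show the expansion of $f$ converges locally uniformly on $\mathbb{C}^{n}$, so for each $z$ the limit of the partial sums equals the pointwise composition $f(Be^{itH}B^{-1}l_{1}(z),\dots,Be^{itH}B^{-1}l_{n}(z))$, which must agree almost everywhere with the $L^{2}$-limit. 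The main obstacle I foresee is the constant-bookkeeping in the basis computation (reconciling $\sqrt{k!/\alpha!}$, $\sqrt{n!}$, and the $\sqrt{2^{\alpha_{j}}\alpha_{j}!}$ to all collapse), and a careful reconciliation at the closing step between the pointwise substitution (how the statement reads) and the $L^{2}$-limit (how the operator naturally produces its output).
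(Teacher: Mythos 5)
Your proposal is correct and follows essentially the same route as the paper: reduce to monomial basis elements, use the multiplicativity identity $B\bigl[\sqrt{k!/\alpha!}\,P_{+}(f_{1}^{\alpha_{1}}\otimes\cdots\otimes f_{n}^{\alpha_{n}})\bigr]=\sqrt{1/\alpha!}\,[Bf_{1}]^{\alpha_{1}}\cdots[Bf_{n}]^{\alpha_{n}}$ together with (\ref{unitarygroup}), and close by density of polynomials with both pointwise and $F^{2}$ convergence. Your derivation of the product identity from Proposition \ref{creation} and (\ref{creationproperty}) is in fact slightly more explicit than the paper's, which asserts the extension from basis vectors to general $f_{j}$ without detail, but the argument is the same in substance.
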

\begin{proof}
We only need to prove the first conclusion, because the proof for the second is similar. Since
\begin{align*}
B \Big[\sqrt{\frac{k !}{\alpha !}} P_{+}\left(h_{1}^{\alpha_{1}} \otimes h_{2}^{\alpha_{2}} \otimes \cdots\otimes h_{n}^{\alpha_{n}}\right)\Big]
&=e_{\alpha_1}\circ l_1\cdots e_{\alpha_n}\circ l_n\\
&=\sqrt{\frac{1}{2^k\alpha!}}l_{1}^{\alpha_1}\cdots l_{n}^{\alpha_n}
\end{align*}
and $Bh_{j}(z)=e_{1}\circ l_j=\sqrt{\frac{1}{2}}l_{j}$, we have
$$
B \Big[\sqrt{\frac{k !}{\alpha !}} P_{+}\left(h_{1}^{\alpha_{1}} \otimes h_{2}^{\alpha_{2}} \otimes \cdots\otimes h_{n}^{\alpha_{n}}\right)\Big]
=\sqrt{\frac{1}{\alpha!}}[Bh_1]^{\alpha_1}\cdots[Bh_n]^{\alpha_n}.
$$
Thus, for any $f_1,\cdots,f_n \in \mathcal{H}$ we have
\begin{equation}\label{equationmap}
B\Big[\sqrt{\frac{k !}{\alpha !}} P_{+}\left(f_{1}^{\alpha_{1}} \otimes \cdots\otimes f_{n}^{\alpha_{n}}\right)\Big]
=\sqrt{\frac{1}{\alpha!}}[Bf_1]^{\alpha_1}\cdots[Bf_n]^{\alpha_n}.
\end{equation}
For any monomial
$$\sqrt{\frac{1}{2^k\alpha!}}[l_{1}(z)]^{\alpha_1}\cdots[l_{n}(z)]^{\alpha_n},$$
we have
\begin{align*}
&Be^{it d\Gamma(H)}B^{-1}\sqrt{\frac{1}{2^k\alpha!}}[l_{1}]^{\alpha_1}\cdots[l_{n}]^{\alpha_n}\\
=&Be^{it d\Gamma(H)}B^{-1}B \Big[\sqrt{\frac{k !}{\alpha !}} P_{+}\left(h_{1}^{\alpha_{1}} \otimes \cdots\otimes h_{n}^{\alpha_{n}}\right)\Big]\\
=&B \Big[\sqrt{\frac{k !}{\alpha !}} P_{+}\left([e^{itH}h_{1}]^{\alpha_{1}} \otimes \cdots\otimes [e^{it H}h_{n}]^{\alpha_{n}}\right)\Big]\\
=&\sqrt{\frac{1}{\alpha!}}[Be^{itH}h_1]^{\alpha_1}\cdots[Be^{itH}h_n]^{\alpha_n}\quad (\text{by (\ref{equationmap})}) \\
=&\sqrt{\frac{1}{\alpha!}}[Be^{itH}B^{-1}Bh_1]^{\alpha_1}\cdots[Be^{itH}B^{-1}Ih_n]^{\alpha_n}\\
=&\sqrt{\frac{1}{2^k \alpha!}}[Be^{itH}B^{-1}l_1]^{\alpha_1}\cdots[Be^{itH}B^{-1}l_n]^{\alpha_n}.
\end{align*}
Then, for any polynomial $p$, we have
$$
Be^{it d\Gamma(H)}B^{-1}p(z)=p(Be^{itH}B^{-1}l_1(z),\cdots ,Be^{itH}B^{-1}l_n(z),\cdots ).
$$
For any positive integer $n$ and $f\in F^2(\mathbb{C}^{n},d\lambda_{n})$,
there is a sequence of polynomials $\{p_n\}$ such that
$$\lim_{n}p_n(z)=f(z)\text{\quad for any $z\in \mathbb{C}^{n}$ and\quad}\lim_{n}p_n=f\text{\quad in $F^2(\mathbb{C}^{n},d\lambda_{n})$}. $$
Thus we have
$$
Be^{it d\Gamma(H)}B^{-1}f(z)=f(Be^{itH}B^{-1}l_1(z),\cdots ,Be^{itH}B^{-1}l_n(z),\cdots ).
$$
\end{proof}
Let $\mu$ be a real number,
the Gibbs grand canonical equilibrium state is defined in terms of the generalized Hamiltonian $K_{\mu}=d \Gamma(H-\mu I)$ whenever
$$
\exp \left\{-\beta d \Gamma(H-\mu I)\right\}
$$
is trace-class. This latter property places a constraint on the possible values of $\mu$.
\begin{proposition}[{\cite[Proposition 5.2.27]{Bartteli}}]\label{traceclass}
Let H be a self-adjoint operator on the Hilbert space $\mathcal{H}$
and let $\beta \in \mathbb{R} .$ The following conditions are equivalent:\\
(1) $e^{-\beta H}$ is of trace-class on $\mathcal{H}$ and $\beta(H-\mu I)>0$,\\
(2) $e^{-\beta d \Gamma(H-\mu I)}$ is of trace-class on $\mathcal{F}_{+}(\mathcal{H})$ for all $\mu \in \mathbb{R}$.
\end{proposition}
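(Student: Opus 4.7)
The plan is to reduce both (1) and (2) to the same explicit convergence criterion by diagonalizing $H$ and computing the trace of $e^{-\beta d\Gamma(H-\mu I)}$ against an eigenbasis of $\mathcal{F}_+(\mathcal{H})$. First, since $e^{-\beta H}$ is trace-class it is in particular compact, which forces $H$ to have pure point spectrum; write its eigenvalues as $\{\lambda_j\}_{j=1}^\infty$ repeated according to multiplicity and fix an orthonormal basis $\{h_j\}$ of $\mathcal{H}$ with $Hh_j=\lambda_j h_j$. Using this specific $\{h_j\}$ I build the symmetric tensor basis $\{E_\alpha\}$ of $\mathcal{F}_+(\mathcal{H})$ described in the introduction.

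Second, the defining relation for $H_n$ on symmetric tensors, applied to $H-\mu I$, immediately gives
$$(H-\mu I)_n E_\alpha = \Bigl(\sum_j \alpha_j(\lambda_j-\mu)\Bigr) E_\alpha,$$
so each $E_\alpha$ is an eigenvector with the stated eigenvalue. I would then identify the self-adjoint closure $d\Gamma(H-\mu I)$ with the operator diagonalized on $\{E_\alpha\}$ by these eigenvalues, using that the algebraic span of the $E_\alpha$ is a core. Consequently, via functional calculus,
$$\mathrm{Tr}\bigl(e^{-\beta d\Gamma(H-\mu I)}\bigr) = \sum_\alpha \exp\Bigl(-\beta\sum_j \alpha_j(\lambda_j-\mu)\Bigr),$$
the sum running over all sequences $\alpha$ of non-negative integers with finite support. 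Restricting to $\alpha$ supported in $\{1,\ldots,N\}$ factors this as a finite product of geometric sums, and monotone convergence in $N$ (all summands are positive) yields
$$\mathrm{Tr}\bigl(e^{-\beta d\Gamma(H-\mu I)}\bigr) = \prod_{j=1}^\infty \sum_{n=0}^\infty e^{-n\beta(\lambda_j-\mu)},$$
understood as an element of $[0,\infty]$.

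Third, I analyze convergence of this product. Each factor is finite iff $\beta(\lambda_j-\mu)>0$, and imposing this for every $j$ is exactly the condition $\beta(H-\mu I)>0$; under this hypothesis the $j$-th factor equals $(1-e^{-\beta(\lambda_j-\mu)})^{-1}$. The infinite product then converges iff $\sum_j e^{-\beta(\lambda_j-\mu)}<\infty$, and since this sum equals $e^{\beta\mu}\,\mathrm{Tr}(e^{-\beta H})$, its finiteness is equivalent to $e^{-\beta H}$ being trace-class. This proves $(1)\Rightarrow(2)$. For the converse, if even one factor in the product is infinite then the whole trace is infinite, so finiteness of the trace forces $\beta(H-\mu I)>0$, after which the product-convergence criterion forces $e^{-\beta H}$ to be trace-class.

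The main obstacle is the rigorous identification of $d\Gamma(H-\mu I)$ with the operator diagonalized on $\{E_\alpha\}$, which is delicate because $H$ is unbounded. My plan here is to define a candidate operator $A$ on $\mathcal{F}_+(\mathcal{H})$ by the spectral prescription $AE_\alpha=\bigl(\sum_j \alpha_j(\lambda_j-\mu)\bigr)E_\alpha$ on its natural maximal domain, observe that $A$ is self-adjoint since it is a multiplication operator in the $\{E_\alpha\}$-picture, verify that its restriction to the algebraic span of $\{E_\alpha\}$ coincides with the symmetric operator $\bigoplus_n (H-\mu I)_n$ whose closure defines $d\Gamma(H-\mu I)$, and then invoke uniqueness of the self-adjoint extension on a core to conclude $A=d\Gamma(H-\mu I)$. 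A subsidiary bookkeeping point is the harmless scalar factor $e^{\beta\mu}$ relating the traces of $e^{-\beta H}$ and $e^{-\beta(H-\mu I)}$.
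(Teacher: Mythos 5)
The paper itself offers no proof of this proposition --- it is quoted from Bratteli--Robinson \cite{Bartteli} --- so there is nothing internal to compare against; your argument is the standard one and is essentially the proof in the cited reference. The forward direction $(1)\Rightarrow(2)$ is sound as written: diagonalizing $H$, checking that the occupation-number basis $\{E_\alpha\}$ consists of eigenvectors of $d\Gamma(H-\mu I)$ with eigenvalues $\sum_j\alpha_j(\lambda_j-\mu)$, identifying $d\Gamma(H-\mu I)$ with the diagonal operator via a core/unique-self-adjoint-extension argument, and factoring the trace by Tonelli into $\prod_j\bigl(1-e^{-\beta(\lambda_j-\mu)}\bigr)^{-1}$, whose finiteness is equivalent to $\beta(\lambda_j-\mu)>0$ for all $j$ together with $\sum_j e^{-\beta(\lambda_j-\mu)}=e^{\beta\mu}\operatorname{Tr}(e^{-\beta H})<\infty$.

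The one genuine gap is a circularity in the converse. Your entire framework --- the eigenbasis $\{h_j\}$, hence $\{E_\alpha\}$ and the product formula --- is built from the hypothesis that $e^{-\beta H}$ is compact, which is part of condition (1); you cannot invoke the product formula to \emph{derive} (1) from (2) until you have independently shown that $H$ is diagonalizable. The fix is short but must be stated: the one-particle sector $P_{+}(\otimes^{1}\mathcal{H})\cong\mathcal{H}$ is a reducing subspace for $e^{-\beta d\Gamma(H-\mu I)}$ on which that operator restricts to $e^{-\beta(H-\mu I)}$, and for a positive operator $T$ and an orthogonal projection $Q$ one has $\operatorname{Tr}(QTQ)\leq\operatorname{Tr}(T)$; hence (2) already forces $e^{-\beta(H-\mu I)}=e^{\beta\mu}e^{-\beta H}$ to be trace-class on $\mathcal{H}$. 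That yields both the first half of (1) and the pure point spectrum needed to run your product formula and conclude $\beta(\lambda_j-\mu)>0$ for every $j$. With the converse reordered in this way the proof is complete. (A cosmetic remark: the quantifier ``for all $\mu\in\mathbb{R}$'' in (2) as transcribed should be read with $\mu$ fixed and equal to the $\mu$ appearing in (1), which is how you in fact treat it.)
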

Let us now assume that $\exp \left\{-\beta K_{\mu}\right\}$ is of trace-class and then calculate the Gibbs state
$$
\omega(A)=\frac{\operatorname{Tr}\left(e^{-\beta d \Gamma(H-\mu I)} A\right)}{\operatorname{Tr}\left(e^{-\beta d \Gamma(H-\mu I)}\right)}
$$
for any operator $A$ on $\mathcal{F}_{+}(\mathcal{H})$ such that $e^{-\beta d \Gamma(H-\mu I)} A$ is of trace-class.
By Proposition \ref{traceclass}, we have $e^{-\beta(H-\mu I)}$ is a bounded operator.
The Gibbs equilibrium state is important in the quantum physics, see \cite{Bartteli}.
As an application of the Fock space on $\mathbb{C}^{\infty}$, we are going to give a trace formula and apply it to the Gibbs equilibrium state.

For any $z\in \mathbb{C}^{\infty}$ and $x\in \mathbb{C}^{\infty}(d\lambda_{\infty})$, let
$$K_{x}(z)=e^{\frac{1}{2}\langle x,z\rangle}.$$
Let $x^{(n)}=(x_1,\cdots,x_n,0,0,\cdots)$ and $z^{(n)}=(z_1,\cdots,z_n,0,0,\cdots)$,
then $K_{x^{(n)}}(z^{(n)})$ is the reproducing kernel of $F^2(\mathbb{C}^n,d\lambda_{n}).$ Let $X^{(n)}$ be an operator on $F^2(\mathbb{C}^n,d\lambda_{n})$, by \cite[Proposition 4]{zhu}, we have
$$\operatorname{Tr}(X^{(n)})=\int_{\mathbb{C}^n}\langle X^{(n)}K_{x^{(n)}},K_{x^{(n)}} \rangle d\lambda_{n}(x^{(n)}).$$

\begin{theorem}\label{trace}
If $X$ is an operator on $F^2(\mathbb{C}^{\infty},d\lambda_{\infty})$, then $X$ is of trace-class if and only if
$$|\lim_{n\rightarrow\infty}\int_{\mathbb{C}^n}\langle X K_{x^{(n)}},K_{x^{(n)}} \rangle d\lambda_{n}(x^{(n)})|<\infty.$$
In that case, we have
$$\operatorname{Tr}(X)=\lim_{n\rightarrow\infty}\int_{\mathbb{C}^n}\langle X K_{x^{(n)}},K_{x^{(n)}} \rangle d\lambda_{n}(x^{(n)}).$$
Moreover,  if $A$ is an operator on the Bose-Fock space, then we have
\begin{align*}
&\operatorname{Tr}(e^{-\beta d \Gamma(H-\mu I)} A)\\
=&\lim_{n\rightarrow\infty}\int_{\mathbb{C}^n}\int_{\mathbb{C}^{\infty}} (BAB^{-1} K_{x^{(n)}})(z)\overline{K_{x^{(n)}}\Big(Be^{-\beta (H-\mu I)}B^{-1}l_1(z),\cdots\Big)} d\lambda_{\infty}(z) d\lambda_{n}(x^{(n)}).
\end{align*}
\end{theorem}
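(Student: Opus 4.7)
The plan is to reduce the infinite-dimensional trace formula to the finite-dimensional identity of \cite{zhu} by truncating along the projections $P_n:F^2(\mathbb{C}^{\infty},d\lambda_{\infty})\to F^2(\mathbb{C}^n,d\lambda_n)$. The starting observation is that for $x^{(n)}=(x_1,\ldots,x_n,0,0,\ldots)$ the function $K_{x^{(n)}}(z)=e^{\frac{1}{2}\sum_{j=1}^{n}\overline{x_j}z_j}$ depends only on $z_1,\ldots,z_n$, so $K_{x^{(n)}}$ lies in $F^2(\mathbb{C}^n,d\lambda_n)\subset F^2(\mathbb{C}^{\infty},d\lambda_{\infty})$ and is precisely the reproducing kernel of the finite-dimensional Fock space at $(x_1,\ldots,x_n)$. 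Thus for any bounded operator $X$,
$$
\langle XK_{x^{(n)}},K_{x^{(n)}}\rangle=\langle P_nXP_n K_{x^{(n)}},K_{x^{(n)}}\rangle,
$$
and Zhu's finite-dimensional formula gives
$$
\int_{\mathbb{C}^n}\langle XK_{x^{(n)}},K_{x^{(n)}}\rangle\,d\lambda_n(x^{(n)})=\operatorname{Tr}(P_nXP_n).
$$

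From here, density of $\bigcup_n F^2(\mathbb{C}^n,d\lambda_n)$ in $F^2(\mathbb{C}^{\infty},d\lambda_{\infty})$ yields $P_n\to I$ in the strong operator topology. For trace-class $X$, a standard finite-rank approximation in trace norm gives $\operatorname{Tr}(P_nXP_n)\to\operatorname{Tr}(X)$, which proves the forward implication and the value of the limit. For the converse, I would apply the polar decomposition $X=U|X|$: finiteness of $\lim\operatorname{Tr}(P_nXP_n)$ must be leveraged, together with $|U|\le I$, to bound the monotone sequence $\operatorname{Tr}(P_n|X|P_n)=\operatorname{Tr}(|X|^{1/2}P_n|X|^{1/2})$, and monotone convergence then forces $|X|$ (hence $X$) to be trace-class.

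For the Gibbs formula, I would apply the identity above to $X=Be^{-\beta d\Gamma(H-\mu I)}AB^{-1}$ on $F^2(\mathbb{C}^{\infty},d\lambda_{\infty})$ and transport the trace via the unitary $B$. Factoring $X=Be^{-\beta d\Gamma(H-\mu I)}B^{-1}\cdot BAB^{-1}$ and using the self-adjointness of $e^{-\beta d\Gamma(H-\mu I)}$, I move the exponential onto the right entry of the inner product:
$$
\langle Be^{-\beta d\Gamma(H-\mu I)}B^{-1}\cdot BAB^{-1}K_{x^{(n)}},K_{x^{(n)}}\rangle=\langle BAB^{-1}K_{x^{(n)}},Be^{-\beta d\Gamma(H-\mu I)}B^{-1}K_{x^{(n)}}\rangle.
$$
Since $K_{x^{(n)}}\in F^2(\mathbb{C}^n,d\lambda_n)\subset\bigcup_n F^2(\mathbb{C}^n,d\lambda_n)$, Theorem \ref{second quantization} applies and yields
$$
Be^{-\beta d\Gamma(H-\mu I)}B^{-1}K_{x^{(n)}}(z)=K_{x^{(n)}}\bigl(Be^{-\beta(H-\mu I)}B^{-1}l_1(z),\ldots\bigr).
$$
Expanding the inner product as an integral against $d\lambda_{\infty}$ produces the stated formula.

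The main obstacle I expect is the converse direction of the trace-class characterization for non-positive $X$: because the integrand can oscillate in sign, finiteness of $\lim\operatorname{Tr}(P_nXP_n)$ does not directly yield trace-class. Passing to the positive operator $|X|$ via the polar decomposition, and justifying that the bound on the original limit transfers to a uniform bound on the monotone sequence $\operatorname{Tr}(P_n|X|P_n)$, is the delicate point. A secondary technical check is that Theorem \ref{second quantization} is being applied to the kernel $K_{x^{(n)}}$ rather than to an arbitrary polynomial; this is legitimate because $K_{x^{(n)}}\in F^2(\mathbb{C}^n,d\lambda_n)$, but if needed it can be reduced to the polynomial case by approximating $K_{x^{(n)}}$ in $F^2(\mathbb{C}^n,d\lambda_n)$.
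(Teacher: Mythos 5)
Your argument follows the paper's own route: truncate with the projections $P_n$, identify $\int_{\mathbb{C}^n}\langle XK_{x^{(n)}},K_{x^{(n)}}\rangle\,d\lambda_n$ with $\operatorname{Tr}(P_nXP_n)$ via Zhu's finite-dimensional formula, and pass to the limit. The only cosmetic difference is how the convergence $\operatorname{Tr}(P_nXP_n)\to\operatorname{Tr}(X)$ is organized: the paper writes $\operatorname{Tr}(P_mXP_m)$ as the partial sum $\sum_{n\le m}\sum_{e\in E_n}\langle Xe,e\rangle$ of the absolutely convergent trace series over an orthonormal basis adapted to the filtration $F^2(\mathbb{C}^1)\subset F^2(\mathbb{C}^2)\subset\cdots$, whereas you invoke trace-norm approximation by finite-rank operators together with $P_n\to I$ strongly; both are correct. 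Your treatment of the Gibbs formula (factor out $e^{-\beta d\Gamma(H-\mu I)}$, use its self-adjointness to move it to the right slot, and apply Theorem \ref{second quantization} to $K_{x^{(n)}}\in F^2(\mathbb{C}^n,d\lambda_n)$) is exactly the content behind the paper's one-line remark that the second conclusion follows from Theorem \ref{second quantization}.

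The converse direction is the one genuine issue, and you are right to flag it, but your proposed repair does not work: for a general bounded $X$, finiteness of $\lim_n\operatorname{Tr}(P_nXP_n)$ gives no control whatsoever on the monotone sequence $\operatorname{Tr}(P_n|X|P_n)$ --- consider an operator that is diagonal in the basis $\bigcup_n E_n$ with diagonal entries summing conditionally but not absolutely along this exhaustion; it satisfies the hypothesis yet is not trace-class, and $|X|$ sees only the absolute values. So the ``if'' direction of the equivalence fails as stated without an additional positivity (or absolute-summability) assumption. You should not feel obliged to close this gap: the paper's own proof of the converse consists of the single sentence ``by the argument above, we know that $X$ is of trace-class,'' which is no proof at all. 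In the application actually made of the theorem, the operator $e^{-\beta d\Gamma(H-\mu I)}$ is positive and already known to be trace-class by Proposition \ref{traceclass}, so only the forward direction (which you and the paper both establish correctly) is ever used.
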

\begin{proof}Let
$$
E_n=\left\{e_{\alpha_1}\circ l_1\times e_{\alpha_2}\circ l_2\times\cdots\times e_{\alpha_n}\circ l_n: \quad  \alpha_{1},\cdots,\alpha_{n}\geq 1\right\} \text{ when $n\geq 1$ and } E_0=\{\mathbb{I}\},$$
where $\mathbb{I}$ stands for the function which maps all $z\in\mathbb{C}^{\infty}$ to $1$.
It is easy to see that $E_n$ is the set of basis of $F^2(\mathbb{C}^n,d\lambda_n)\ominus F^2(\mathbb{C}^{n-1},d\lambda_n).$ Thus
we know that $\bigcup_{n=0}^{\infty}E_n$ is a basis set of the Fock space on $\mathbb{C}^{\infty}$. If $X$ is of trace-class, then
$$
\operatorname{Tr}(X)=\sum_{n=0}^{\infty} \sum_{e\in E_n}\langle X e,e \rangle
=\lim_{m\rightarrow\infty}\sum_{n=0}^{m} \sum_{e\in E_n}\langle X e,e \rangle.
$$
Since $P_{m}$ is the projection from $F^2(\mathbb{C}^{\infty}, d\lambda_{\infty})$ to $F^2(\mathbb{C}^{m}, d\lambda_{m})$,
we have $P_{m}XP_{m}$ is an operator from $F^2(\mathbb{C}^{m}, d\lambda_{m})$ to $F^2(\mathbb{C}^{m}, d\lambda_{m})$. Since $\bigcup_{n=0}^m E_n$ is a basis set of $F^2(\mathbb{C}^m,d\lambda_m)$, we have
\begin{align*}
\sum_{n=0}^{m} \sum_{e\in E_n}\langle X e,e \rangle&=\sum_{n=0}^{m} \sum_{e\in E_n}\langle P_mXP_m e,e \rangle=\operatorname{Tr}(P_mXP_m)\\
&=\int_{\mathbb{C}^m}\langle P_mXP_m K_{x^{(m)}},K_{x^{(m)}} \rangle d\lambda_{m}(x^{(m)})\\
&=\int_{\mathbb{C}^m}\langle X K_{x^{(m)}},K_{x^{(m)}} \rangle d\lambda_{m}(x^{(m)}).
\end{align*}
Thus
$$|\lim_{m\rightarrow\infty}\int_{\mathbb{C}^m}\langle X K_{x^{(m)}},K_{x^{(m)}} \rangle d\lambda_{m}(x^{(m)})|=|\lim_{m\rightarrow\infty}\sum_{n=0}^{m} \sum_{e\in E_n}\langle X e,e \rangle|=|\operatorname{Tr}(X)| <\infty.$$
On the other hand, if
$$|\lim_{n\rightarrow\infty}\int_{\mathbb{C}^n}\langle X K_{x^{(n)}},K_{x^{(n)}} \rangle|<\infty,$$
then, by the argument above, we know that $X$ is of trace-class.

The second conclusion follows from Theorem \ref{second quantization}.
\end{proof}

If $e^{-\beta d \Gamma(H-\mu I)}$ is of trace-class,
by Proposition \ref{traceclass}, we know that $\beta (H-\mu I)$ is a positive operator and $e^{-\beta H}$ is of trace-class on $\mathcal{H}$.
We suppose that $\{\lambda_j:j=1,\cdots,n,\cdots\}$ and $\{v_j:j=1,\cdots,n,\cdots\}$ are eigenvalues and eigenvectors of
$e^{-\beta (H-\mu I)}$ such that
\begin{equation}\label{eigenvector}
e^{-\beta (H-\mu I) }v_j =\lambda_jv_j.
\end{equation}
Thus $0<\lambda_j<1$.
Since $e^{-\beta (H-\mu I)}$ is a self-adjoint operator, we suppose $\{v_j\}$ is a basis of $\mathcal{H}$.

Next, we are going to study the Gibbs state.
Let $B_{H}$ denote the isomorphism from the Bose-Fock space to the Fock space over $\mathbb{C}^{\infty}$ such that
$$
B_{H} \Big[\sqrt{\frac{k !}{\alpha !}} P_{+}\left(v_{1}^{\alpha_{1}} \otimes\cdots \otimes v_{n}^{\alpha_{n}}\right)\Big](z)
=e_{\alpha_1}\circ l_1\cdots e_{\alpha_n}\circ l_n,
$$
where $k=\sum_{m}\alpha_{m}$. Because $B^{-1}_{H} l_j=\sqrt{2}B^{-1}_{H} e_{1}\circ l_j=\sqrt{2}v_j$, we have
$$B_{H}e^{-\beta (H-\mu I)} B^{-1}_{H} l_j =\lambda_j l_j. $$
By the construction above and Theorem \ref{trace}, we have the following corollary.
\begin{corollary}\label{trace2}
Let $\beta (H-\mu I)$ be a positive operator and $e^{-\beta (H-\mu I)}$ be of trace-class with eigenvalues $\{\lambda_{k}\}$ on $\mathcal{H}$.
For any operator $A$, we have
\begin{align*}
&\operatorname{Tr}(e^{-\beta d \Gamma(H-\mu I)} A)\\
=&\lim_{n\rightarrow\infty}\int_{\mathbb{C}^n}\int_{\mathbb{C}^{\infty}} (B_{H}AB_{H}^{-1} K_{x^{(n)}})(z)\overline{K_{x^{(n)}}(\lambda_1z_1,\cdots,\lambda_nz_n,0,0,\cdots)} d\lambda_{\infty}(z) d\lambda_{n}(x^{(n)}).
\end{align*}
\end{corollary}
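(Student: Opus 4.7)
The plan is simply to specialize Theorem \ref{trace} to the isomorphism $B_H$ built from the eigenbasis $\{v_j\}$ of $e^{-\beta(H-\mu I)}$, rather than from a generic orthonormal basis $\{h_j\}$. The statement and proof of Theorem \ref{trace} never used any special property of the basis $\{h_j\}$ underlying $B$: the basis $\bigcup_n E_n$ of $F^2(\mathbb{C}^\infty,d\lambda_\infty)$ is the same regardless of this choice, and Theorem \ref{second quantization} applies verbatim with $B_H$ in place of $B$. Consequently Theorem \ref{trace} yields
$$
\operatorname{Tr}(e^{-\beta d\Gamma(H-\mu I)}A)=\lim_{n\to\infty}\int_{\mathbb{C}^n}\int_{\mathbb{C}^\infty}(B_HAB_H^{-1}K_{x^{(n)}})(z)\,\overline{K_{x^{(n)}}\!\bigl(B_He^{-\beta(H-\mu I)}B_H^{-1}l_1(z),\ldots\bigr)}\,d\lambda_\infty(z)\,d\lambda_n(x^{(n)}).
$$

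Next I would identify the action of $B_He^{-\beta(H-\mu I)}B_H^{-1}$ on each coordinate function $l_j$. From the definition of $B_H$ applied to the one-particle subspace one reads off $B_Hv_j=e_1\circ l_j=l_j/\sqrt{2}$, hence $B_H^{-1}l_j=\sqrt{2}\,v_j$. Combining this with the eigenvalue equation (\ref{eigenvector}) gives
$$
B_He^{-\beta(H-\mu I)}B_H^{-1}l_j=B_H(\sqrt{2}\,\lambda_j v_j)=\lambda_j\,l_j,
$$
so $(B_He^{-\beta(H-\mu I)}B_H^{-1}l_j)(z)=\lambda_j z_j$. This is exactly the observation already recorded in the paragraph preceding the corollary.

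Finally I would plug this identity into the formula above. Since $x^{(n)}=(x_1,\ldots,x_n,0,0,\ldots)$, the kernel $K_{x^{(n)}}(w)=\exp\bigl(\tfrac{1}{2}\sum_{j=1}^n\overline{x_j}w_j\bigr)$ depends only on the first $n$ coordinates of $w$; so substituting $\lambda_j z_j$ in the $j$th slot for $j\le n$ (and ignoring what happens for $j>n$) produces precisely $K_{x^{(n)}}(\lambda_1 z_1,\ldots,\lambda_n z_n,0,0,\ldots)$, which is the right-hand side of the claimed formula.

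There is no real obstacle in this argument; the whole corollary is a change-of-basis bookkeeping consequence of Theorem \ref{trace}. The only point requiring a little care is observing that the truncation of $x^{(n)}$ means we never need to give meaning to the infinite sequence $(B_He^{-\beta(H-\mu I)}B_H^{-1}l_j(z))_{j\ge1}$ as a single element of $\mathbb{C}^\infty$; only its first $n$ entries enter the computation.
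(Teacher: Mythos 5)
Your proposal is correct and follows essentially the same route as the paper: the paper's entire justification is the sentence ``By the construction above and Theorem \ref{trace}, we have the following corollary,'' where ``the construction above'' is precisely the identity $B_{H}e^{-\beta(H-\mu I)}B_{H}^{-1}l_j=\lambda_j l_j$ that you derive. Your writeup simply makes explicit the two points the paper leaves implicit, namely that Theorem \ref{trace} is basis-independent and that only the first $n$ coordinates enter $K_{x^{(n)}}$.
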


In \cite[Proposition 5.2.28]{Bartteli}, if $e^{-\beta d \Gamma(H-\mu I)}$ is of trace-class on the Bose-Fock space, we have the following formulas
$$\omega(W(f))=\exp\{-\Big\langle f,(1+ e^{-\beta (H-\mu I)})(1- e^{-\beta( H-\mu I)})^{-1} f\Big\rangle / 4\},$$
and
$$
\omega(a_{+}^*(f)a_{+}(g))=\Big\langle g, e^{-\beta (H-\mu I)}(1- e^{-\beta( H-\mu I)})^{-1} f\Big\rangle.
$$
Next, we are going to show that Theorem \ref{trace} implies these two formulas. Moreover, we will generalize the second one.
\begin{corollary}\label{Weylgibbs}
If $e^{-\beta d \Gamma(H-\mu I)}$ is of trace-class on the Bose-Fock space, then the Gibbs state of a Weyl operator $W(f)$ is given by
$$\omega(W(f))=\exp\{-\Big\langle f,(1+ e^{-\beta (H-\mu I)})(1- e^{-\beta( H-\mu I)})^{-1} f\Big\rangle / 4\}.$$
\end{corollary}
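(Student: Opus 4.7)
The plan is to apply Corollary \ref{trace2} directly with $A = W(f)$ and evaluate the resulting two-stage Gaussian integral in closed form, working throughout in the orthonormal eigenbasis $\{v_j\}$ of $e^{-\beta(H-\mu I)}$ used to define $B_H$. Expand $f = \sum_j w_j v_j$ with $w=(w_j)\in\mathbb{C}^\infty(d\lambda_\infty)$, which is legitimate because $\sum|w_j|^2 = \|f\|^2 < \infty$. By Theorem \ref{weyl} applied to the basis $\{v_j\}$, we have $B_H W(f) B_H^{-1} = U_{-i\bar w}$, and a direct use of the definition of $U_{-i\bar w}$ gives
$$B_H W(f) B_H^{-1} K_{x^{(n)}}(z) = \exp\Bigl(\tfrac{1}{2}\sum_{j=1}^n \bar{x_j} z_j + \tfrac{i}{2}\sum_{j=1}^n \bar{x_j}\bar{w_j} + \tfrac{i}{2}\sum_{j=1}^\infty w_j z_j - \tfrac{1}{4}|w|^2\Bigr).$$

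Multiplying by $\overline{K_{x^{(n)}}(\lambda_1 z_1,\dots,\lambda_n z_n,0,\dots)} = \exp(\tfrac{1}{2}\sum_{j=1}^n \lambda_j x_j \bar{z_j})$ yields an integrand that factorises across coordinates, with each factor exponential-affine in $(z_j,\bar{z_j})$. The one-dimensional identity $\int_\mathbb{C} e^{\alpha z + \beta \bar z}\,d\lambda_1 = e^{2\alpha\beta}$ integrates out every $z_j$: the factors for $j>n$ lack $\bar{z_j}$ and therefore collapse to $1$, while those for $j\le n$ contribute $\exp(\tfrac{1}{2}\lambda_j|x_j|^2 + \tfrac{i}{2}\lambda_j x_j w_j)$. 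The subsequent $x^{(n)}$-integration is likewise a product of one-dimensional quadratic Gaussians, and a routine completion of squares (legitimate because $0<\lambda_j<1$ by Proposition \ref{traceclass}) gives
$$\int_\mathbb{C} e^{\frac{1}{2}\lambda_j|x_j|^2 + \frac{i}{2}\lambda_j x_j w_j + \frac{i}{2}\bar{x_j}\bar{w_j}}\,d\lambda_1(x_j) = \frac{1}{1-\lambda_j}\exp\Bigl(-\frac{\lambda_j|w_j|^2}{2(1-\lambda_j)}\Bigr).$$
Assembling the factors and sending $n\to\infty$ yields
$$\operatorname{Tr}\bigl(e^{-\beta d\Gamma(H-\mu I)} W(f)\bigr) = \prod_{j=1}^\infty\frac{1}{1-\lambda_j}\cdot\exp\Bigl(-\tfrac{1}{4}|w|^2 - \tfrac{1}{2}\sum_j\tfrac{\lambda_j|w_j|^2}{1-\lambda_j}\Bigr).$$

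Specialising to $f=0$ gives $\operatorname{Tr}(e^{-\beta d\Gamma(H-\mu I)}) = \prod_j(1-\lambda_j)^{-1}$, which is finite precisely under the trace-class hypothesis. Dividing, the infinite product cancels, and the algebraic identity $1 + 2\lambda_j/(1-\lambda_j) = (1+\lambda_j)/(1-\lambda_j)$ rewrites the exponent as $-\tfrac{1}{4}\sum_j |w_j|^2(1+\lambda_j)/(1-\lambda_j)$; since $(1+e^{-\beta(H-\mu I)})(1-e^{-\beta(H-\mu I)})^{-1}$ is diagonal in $\{v_j\}$ with eigenvalues $(1+\lambda_j)/(1-\lambda_j)$, this equals $-\tfrac{1}{4}\langle f,(1+e^{-\beta(H-\mu I)})(1-e^{-\beta(H-\mu I)})^{-1} f\rangle$, as claimed. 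The main technical points are the unconditional convergence of the product $\prod(1-\lambda_j)^{-1}$ and of $\sum \lambda_j|w_j|^2/(1-\lambda_j)$ as $n\to\infty$: both follow from the trace-class assumption (which forces $\sum_j \lambda_j < \infty$ and hence $\lambda_j\to 0$) together with $w\in\ell^2$, while the interchange of $n\to\infty$ with the double integral is exactly what Corollary \ref{trace2} certifies.
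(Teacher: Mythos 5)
Your proof is correct and follows essentially the same route as the paper: apply Corollary \ref{trace2} with $A=W(f)$, represent $B_HW(f)B_H^{-1}$ as the translation operator $U_{-i\overline{w}}$ via Theorem \ref{weyl}, factor the resulting double Gaussian integral over coordinates (the paper handles the tail $j>n$ with a slightly more careful $L^2$ limiting argument via Lemma \ref{weyloperator}, and evaluates the $z$-integral by the reproducing property rather than your coordinatewise identity, but these are cosmetic differences), and finally divide by the $f=0$ case. One remark: your partition function $\prod_j(1-\lambda_j)^{-1}$ is in fact the correct value --- the paper's substitution $x_j\mapsto x_j/\sqrt{1-\lambda_j}$ on $\mathbb{C}\cong\mathbb{R}^2$ should yield the Jacobian $\prod_j(1-\lambda_j)^{-1}$ rather than $\prod_j(1-\lambda_j)^{-1/2}$ --- but since this constant cancels in the ratio defining $\omega(W(f))$, both computations arrive at the stated formula.
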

\begin{proof}
Let $\{v_{j}\}$ be a basis as in (\ref{eigenvector}), we suppose $f=\sum_{j}f_{j}v_{j}$.
Let $y=(-i\overline{f_1},\cdots,-i\overline{f_n},\cdots)\in \mathbb{C}^{\infty}$, by Theorem \ref{weyl} we have
$$B_{H}W(h)B^{-1}_{H}=U_{y}.$$
Then, by Corollary \ref{trace2}, we have
\begin{align*}
&\operatorname{Tr}(e^{-\beta d \Gamma(H-\mu I)} W(f))\\
=&\lim_{n\rightarrow\infty}\int_{\mathbb{C}^n}\int_{\mathbb{C}^{\infty}} (U_y K_{x^{(n)}})(z)\overline{K_{x^{(n)}}(\lambda_1z_1,\cdots,\lambda_nz_n,0,0,\cdots)} d\lambda_{\infty}(z) d\lambda_{n}(x^{(n)}).
\end{align*}
By the definition of $U_{y}$, we have
\begin{align*}
&\int_{\mathbb{C}^{\infty}} (U_y K_{x^{(n)}})(z)\overline{K_{x^{(n)}}(\lambda_1z_1,\cdots,\lambda_nz_n,0,0,\cdots)} d\lambda_{\infty}(z)\\
=&\int_{\mathbb{C}^{\infty}} e^{\frac{1}{2}\sum_{j=1}^n \overline{x_{j}}(z_j-y_j) +\frac{1}{2}\sum_{j=1}^{\infty} \overline{y_{j}}l_j(z)-\frac{1}{4}|y|^2 } e^{\frac{1}{2}\sum_{j=1}^{n} x_{j}\lambda_{j}\overline{z_{j}}} d\lambda_{\infty}(z).
\end{align*}
Since
$$\int_{\mathbb{C}} e^{\frac{1}{2}\overline{y_{j}}l_j(z)} d\lambda_{1}(z_j)=1 \text{ for any }j,$$
we have
\begin{align*}
\int_{\mathbb{C}^{\infty}} e^{\frac{1}{2}\sum_{j=n+1}^{\infty} \overline{y_{j}}l_j(z)} d\lambda_{\infty}(z)
=\int_{\mathbb{C}^{\infty}} e^{\frac{1}{2}\sum_{j=m}^{\infty} \overline{y_{j}}l_j(z)} d\lambda_{\infty}(z)
\end{align*}
for any integer $m$ and
\begin{align*}
\lim_{m\rightarrow\infty}|\int_{\mathbb{C}^{\infty}} e^{\frac{1}{2}\sum_{j=m}^{\infty} \overline{y_{j}}l_j(z)} d\lambda_{\infty}(z)-1|
=&\lim_{m\rightarrow\infty}\int_{\mathbb{C}^{\infty}} |e^{\frac{1}{2}\sum_{j=m}^{\infty} \overline{y_{j}}l_j(z)}-1| d\lambda_{\infty}(z)\\
\leq&\lim_{m\rightarrow\infty}\|e^{\frac{1}{2}\sum_{j=m}^{\infty} \overline{y_{j}}l_j}-1  \|_{F^2(\mathbb{C}^{\infty},d\lambda_{\infty})}\\
\leq&\lim_{m\rightarrow\infty}\|e^{\frac{1}{4}\sum_{j=m}^{\infty}|y_{j}|^2}U_{y-y^{(m)}}1-1  \|_{F^2(\mathbb{C}^{\infty},d\lambda_{\infty})}\\
=&0\text{ (by Lemma \ref{weyloperator})}.
\end{align*}
Thus
$$
\int_{\mathbb{C}^{\infty}} e^{\frac{1}{2}\sum_{j=n+1}^{\infty} \overline{y_{j}}l_j(z)} d\lambda_{\infty}(z)
=\lim_{m\rightarrow\infty}\int_{\mathbb{C}^{\infty}} e^{\frac{1}{2}\sum_{j=m}^{\infty} \overline{y_{j}}l_j(z)} d\lambda_{\infty}(z)=1.
$$
Then, we have
\begin{align*}
&\int_{\mathbb{C}^{\infty}} (U_y K_{x^{(n)}})(z)\overline{K_{x^{(n)}}(\lambda_1z_1,\cdots,\lambda_nz_n,0,0,\cdots)} d\lambda_{\infty}(z)\\
=&\int_{\mathbb{C}^{n}} e^{\frac{1}{2}\sum_{j=1}^n \overline{x_{j}}(z_j-y_j) +\frac{1}{2}\sum_{j=1}^{n}
\overline{y_{j}}z_j-\frac{1}{4}|y|^2 } e^{\frac{1}{2}\sum_{j=1}^{n} x_{j}\lambda_{j}\overline{z_{j}}} d\lambda_{n}(z)\\
=& e^{\frac{1}{2}\sum_{j=1}^n \overline{x_{j}}(\lambda_{j}x_j-y_j) +\frac{1}{2}\sum_{j=1}^{n}
\overline{y_{j}}\lambda_{j}x_j-\frac{1}{4}|y|^2}.
\end{align*}
Thus
\begin{align*}
&\operatorname{Tr}(e^{-\beta d \Gamma(H-\mu I)} W(f))\\
=&\lim_{n\rightarrow\infty}\int_{\mathbb{C}^n}e^{\frac{1}{2}\sum_{j=1}^n \overline{x_{j}}(\lambda_{j}x_j-y_j) +\frac{1}{2}\sum_{j=1}^{n}
\overline{y_{j}}\lambda_{j}x_j-\frac{1}{4}|y|^2 } d\lambda_{n}(x^{(n)})\\
=&\lim_{n\rightarrow\infty}\int_{\mathbb{C}^n}e^{\frac{1}{2}\sum_{j=1}^n \overline{x_{j}}(\lambda_{j}x_j-y_j) +\frac{1}{2}\sum_{j=1}^{n}
\overline{y_{j}}\lambda_{j}x_j-\frac{1}{4}|y|^2 } \frac{1}{(2\pi)^{2n}}e^{-\frac{1}{2}\sum_{j=1}^{n}|x_{j}|^2}d(x^{(n)})\\
=&\lim_{n\rightarrow\infty}\int_{\mathbb{C}^n}e^{\frac{1}{2}\sum_{j=1}^n \overline{x_{j}}(-y_j) +\frac{1}{2}\sum_{j=1}^{n}
\overline{y_{j}}\lambda_{j}x_j-\frac{1}{4}|y|^2 } \frac{1}{(2\pi)^{2n}}e^{-\frac{1}{2}\sum_{j=1}^{n}(1-\lambda_j)|x_{j}|^2}d(x^{(n)})\\
=&\lim_{n\rightarrow\infty}\frac{1}{\prod_{j=1}^{n}\sqrt{1-\lambda_j}} \int_{\mathbb{C}^n}e^{\frac{1}{2}\sum_{j=1}^n \overline{\frac{x_{j}}{\sqrt{1-\lambda_j}}}(-y_j) +\frac{1}{2}\sum_{j=1}^{n}
\overline{y_{j}}\lambda_{j}\frac{x_j}{\sqrt{1-\lambda_j}}-\frac{1}{4}|y|^2 } d\lambda_{n}(x^{(n)})\\
=&\lim_{n\rightarrow\infty}\frac{1}{\prod_{j=1}^{n}\sqrt{1-\lambda_j}}
e^{-\frac{1}{4}|y|^2-\sum_{j=1}^{n}\frac{1}{2}\frac{\lambda_j}{1-\lambda_j}|y_j|^2 }\\
=&\frac{1}{\prod_{j=1}^{\infty}\sqrt{1-\lambda_j}}
e^{-\frac{1}{4}|y|^2-\sum_{j=1}^{\infty}\frac{1}{2}\frac{\lambda_j}{1-\lambda_j}|y_j|^2 }\\
=&\frac{1}{\prod_{j=1}^{\infty}\sqrt{1-\lambda_j}}
e^{-\frac{1}{4}\sum_{j=1}^{\infty}\frac{1+\lambda_j}{1-\lambda_j}|y_j|^2 },
\end{align*}
where $\prod_{j=1}^{\infty}\sqrt{1-\lambda_j}$ is convergent because $\sum_{j}\lambda_{j}=\operatorname{Tr}(e^{-\beta(H-\mu I)})<\infty $ and $\lambda_j<1$. By functional calculus, we have
\begin{align*}
\Big\langle f,(1+ e^{-\beta (H-\mu I)})(1- e^{-\beta (H-\mu I)})^{-1} f\Big\rangle
=&\Big\langle f,(1+ e^{-\beta (H-\mu I)})(1- e^{-\beta (H-\mu I)})^{-1} \sum_{j}f_jv_j\Big\rangle\\
=& \sum_{j}f_j \Big\langle f,(1+ e^{-\beta (H-\mu I)})(1- e^{-\beta (H-\mu I)})^{-1}v_j\Big\rangle\\
=& \sum_{j}f_j \Big\langle f,(1+ \lambda_j )(1- \lambda_j)^{-1}v_j\Big\rangle\\
=& \sum_{j}|f_j|^2\frac{1+\lambda_j}{1-\lambda_j}= \sum_{j}|y_j|^2\frac{1+\lambda_j}{1-\lambda_j}.\\
\end{align*}
Thus, we have
$$
\operatorname{Tr}(e^{-\beta d \Gamma(H-\mu I)} W(f))=\frac{1}{\prod_{j=1}^{\infty}\sqrt{1-\lambda_j}}
e^{-\frac{1}{4}\Big\langle f,(1+ e^{-\beta (H-\mu I)})(1- e^{-\beta( H-\mu I)})^{-1} f\Big\rangle}.
$$
Since $W(f)=I$ when $f=0$, we have
$$
\operatorname{Tr}(e^{-\beta d \Gamma(H-\mu I)})=\frac{1}{\prod_{j=1}^{\infty}\sqrt{1-\lambda_j}}.
$$
Then we have
$$
\omega(W(f))=\frac{\operatorname{Tr}(e^{-\beta d \Gamma(H-\mu I)} W(f))}{\operatorname{Tr}(e^{-\beta d \Gamma(H-\mu I)})}
=e^{-\frac{1}{4}\Big\langle f,(1+ e^{-\beta (H-\mu I)})(1- e^{-\beta (H-\mu I)})^{-1} f\Big\rangle}.
$$
\end{proof}

\begin{corollary}
If $e^{-\beta d \Gamma(H-\mu I)}$ is of trace-class on the Bose-Fock space, then
\begin{align*}
&\omega\Big(a_{+}^*(f^{(1)})\cdots a_{+}^*(f^{(m)}) a_{+}(g^{(1)})\cdots a_{+}(g^{(m)})\Big)\\
=&\Big\langle\sqrt{m !}P_{+}\left(\tilde{f}^{(1)} \otimes \cdots\otimes \tilde{f}^{(m)}\right),
\sqrt{m !}P_{+}\left(\tilde{g}^{(1)} \otimes \cdots\otimes \tilde{g}^{(m)}\right)\Big\rangle_{\mathcal{F}_{+}(\mathcal{H})},
\end{align*}
where
$$\tilde{f}^{(k)}=\frac{e^{-\beta (H-\mu I)}}{\sqrt{1-e^{-\beta (H-\mu I)}}} f^{(k)}
\text{ and }
\tilde{g}^{(k)}=\frac{1}{\sqrt{1-e^{-\beta (H-\mu I)}}} g^{(k)}.
$$
\end{corollary}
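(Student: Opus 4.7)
The plan is to compute the trace $\operatorname{Tr}(e^{-\beta d\Gamma(H-\mu I)}A)$ of $A=a_{+}^{*}(f^{(1)})\cdots a_{+}^{*}(f^{(m)}) a_{+}(g^{(1)})\cdots a_{+}(g^{(m)})$ directly via the trace formula of Corollary~\ref{trace2}, following the same template as the proof of Corollary~\ref{Weylgibbs}. Writing $f^{(k)}=\sum_{j} f_{j}^{(k)} v_{j}$ and $g^{(k)}=\sum_{j} g_{j}^{(k)} v_{j}$ in the eigenbasis of $e^{-\beta(H-\mu I)}$, Proposition~\ref{creation} turns $B_{H} A B_{H}^{-1}$ into the product of Toeplitz operators
\[
T_{B_{H} f^{(1)}}\cdots T_{B_{H} f^{(m)}} T_{\overline{B_{H} g^{(1)}}}\cdots T_{\overline{B_{H} g^{(m)}}},
\]
whose symbols are the linear functions $B_{H} f^{(k)}=2^{-1/2}\sum_{j} f_{j}^{(k)} l_{j}$ and their conjugates.

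Each annihilation Toeplitz operator acts on the reproducing kernel $K_{x^{(n)}}$ as scalar multiplication, since $T_{\overline{l_{j}}} K_{x^{(n)}}=\overline{x_{j}} K_{x^{(n)}}$ for $j\le n$ and vanishes for $j>n$. Iterating and then applying the creation Toeplitz operators (which are just multiplications by polynomial symbols) yields $B_{H} A B_{H}^{-1} K_{x^{(n)}}(z)=2^{-m}\prod_{k}\overline{G_{k}(x^{(n)})}\,\prod_{k}\bigl(\sum_{j} f_{j}^{(k)} z_{j}\bigr) K_{x^{(n)}}(z)$, with $G_{k}(x^{(n)})=\sum_{j\le n} g_{j}^{(k)} x_{j}$. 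Plugging this into Corollary~\ref{trace2}, the inner integral over $\mathbb{C}^{\infty}$ factorises mode by mode: for $j>n$ the Gaussian integral kills any polynomial term involving $z_{j}$, while for $j\le n$ the identity $\int z^{p} e^{(\overline{a} z+b\overline{z})/2}\,d\lambda_{1}=b^{p}\,e^{\overline{a} b/2}$ evaluates each mode and produces $\prod_{k} F_{k}(x^{(n)})\,e^{\frac{1}{2}\sum_{j\le n}\lambda_{j}|x_{j}|^{2}}$ with $F_{k}(x^{(n)})=\sum_{j\le n} f_{j}^{(k)}\lambda_{j} x_{j}$. The outer integral in $x^{(n)}$ is then a Gaussian expectation of $2^{-m}\prod_{k} F_{k}\overline{G_{k}}$ against the weight $e^{-\frac{1}{2}\sum(1-\lambda_{j})|x_{j}|^{2}}$; the substitution $y_{j}=\sqrt{1-\lambda_{j}}\,x_{j}$ rescales $F_{k}$ and $G_{k}$ exactly into the linear forms $\sum_{j}\tilde{f}_{j}^{(k)} y_{j}$ and $\sum_{j}\tilde{g}_{j}^{(k)} y_{j}$ of the theorem, and produces the expected Jacobian depending only on the eigenvalues $\lambda_{j}$.

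Wick's theorem for complex Gaussian integrals (using $\int y_{i}\overline{y_{j}}\,d\lambda_{1}=2\delta_{ij}$) evaluates the remaining integral as $2^{m}\sum_{\sigma}\prod_{k}\langle\tilde{g}^{(\sigma(k))},\tilde{f}^{(k)}\rangle_{\mathcal{H}}$, with the factors of $2^{\pm m}$ cancelling. Letting $n\to\infty$ and dividing by $\operatorname{Tr}(e^{-\beta d\Gamma(H-\mu I)})$ cancels the remaining spectral prefactor obtained in the proof of Corollary~\ref{Weylgibbs} and leaves $\omega(A)=\sum_{\sigma}\prod_{k}\langle\tilde{g}^{(\sigma(k))},\tilde{f}^{(k)}\rangle_{\mathcal{H}}$. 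The standard permanent identity for symmetric tensors,
\[
\Bigl\langle \sqrt{m!}\,P_{+}(u_{1}\otimes\cdots\otimes u_{m}),\ \sqrt{m!}\,P_{+}(v_{1}\otimes\cdots\otimes v_{m}) \Bigr\rangle_{\mathcal{F}_{+}(\mathcal{H})}=\sum_{\sigma}\prod_{k}\langle u_{\sigma(k)},v_{k}\rangle_{\mathcal{H}},
\]
then identifies this sum with the inner product on $\mathcal{F}_{+}(\mathcal{H})$ displayed in the statement.

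The main obstacle I anticipate is justifying the interchange of the limit $n\to\infty$ with the several Gaussian integrations and with the sum over permutations; this rests on the trace-class hypothesis on $e^{-\beta(H-\mu I)}$, which forces $\sum_{j}\lambda_{j}<\infty$, the convergence of the infinite spectral product, and the absolute summability of each pairing $\sum_{j}\tilde{f}_{j}^{(k)}\overline{\tilde{g}_{j}^{(l)}}$. The remaining work is the bookkeeping of the numerical prefactors (powers of $2$, the Jacobian, and the trace normalisation from Corollary~\ref{Weylgibbs}), which is mechanical but has to be done carefully so that all spurious constants conspire to cancel and leave the clean permanent form.
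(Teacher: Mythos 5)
Your proposal is correct and follows essentially the same route as the paper: apply the trace formula of Corollary \ref{trace2}, convert $A$ into Toeplitz operators via Proposition \ref{creation}, let the annihilation symbols act as scalars on the reproducing kernels $K_{x^{(n)}}$, evaluate the inner integral by the reproducing property at $\widetilde{x}^{(n)}=(\lambda_1x_1,\ldots,\lambda_nx_n,0,\ldots)$, and rescale the outer Gaussian integral by $\sqrt{1-\lambda_j}$ to absorb the spectral factors into $\tilde f^{(k)}$ and $\tilde g^{(k)}$. The only (cosmetic) difference is the last step: you evaluate the resulting Gaussian moment by Wick's theorem and then invoke the permanent identity for symmetric tensors, whereas the paper recognizes the same integral as an $F^2(\mathbb{C}^{\infty},d\lambda_{\infty})$ inner product of products of the orthonormal functions $e_1\circ l_j$ and pulls it back through the isometry $B_H$ using (\ref{equationmap}); these two identifications are equivalent.
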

\begin{proof}
Let $\widetilde{x}^{(n)}=(\lambda_1x_1,\cdots,\lambda_nx_n,0,\cdots)$,
$$T_1=a_{+}(f^{(1)})\cdots a_{+}(f^{(m)})\text{ and } T_2=a_{+}(g^{(1)})\cdots a_{+}(g^{(m)}).$$
Since
$$K_{x^{(n)}}(\lambda_1z_1,\cdots,\lambda_nz_n,0,0,\cdots)=e^{\frac{1}{2}\sum_{j=1}^{n}\overline{x_j}\lambda_jz_j}
=K_{\widetilde{x}^{(n)}}(z),$$
we have
\begin{align*}
&\operatorname{Tr}(e^{-\beta d \Gamma(H-\mu I)} T_1^*T_2)\text{ (by Corollary \ref{trace2})}\\
=&\lim_{n\rightarrow\infty}\int_{\mathbb{C}^n}\int_{\mathbb{C}^{\infty}} (B_{H}T^*_1T_2B_{H}^{-1} K_{x^{(n)}})(z)\overline{K_{x^{(n)}}(\lambda_1z_1,\cdots,\lambda_nz_n,0,0,\cdots)} d\lambda_{\infty}(z) d\lambda_{n}(x^{(n)})\\
=&\lim_{n\rightarrow\infty}\int_{\mathbb{C}^n}\int_{\mathbb{C}^{\infty}} (B_{H}T^*_1T_2B_{H}^{-1} K_{x^{(n)}})(z)\overline{K_{\widetilde{x}^{(n)}}(z)} d\lambda_{\infty}(z) d\lambda_{n}(x^{(n)})\\
=&\lim_{n\rightarrow\infty}\int_{\mathbb{C}^n}\int_{\mathbb{C}^{\infty}} (B_H T_{2}B^{-1}_H K_{x^{(n)}})(z)\overline{(B_H T_{1}B^{-1}_HK_{\widetilde{x}^{(n)}})(z)} d\lambda_{\infty}(z) d\lambda_{n}(x^{(n)}).\\
\end{align*}
By Proposition \ref{creation}, we have
$$B_HT_1B^{-1}_H=T_{\overline{B_Hf^{(1)}}}\cdots T_{\overline{B_Hf^{(m)}}}\text{ and }
 B_HT_2B^{-1}_H=T_{\overline{B_Hg^{(1)}}}\cdots T_{\overline{B_Hg^{(m)}}}.$$
We suppose $g^{(k)}=\sum_{j}g^{(k)}_{j}v_{j}$ and $f^{(k)}=\sum_{j}f^{(k)}_{j}v_{j}$.
For any polynomial $p$, we have
\begin{align*}
&\langle p, T_{\overline{B_H g^{(m)}}} K_{x^{(n)}}\rangle_{F^{2}(\mathbb{C}^{\infty},d\lambda_{\infty})}
=\langle T_{B_H g^{(m)}}p ,K_{x^{(n)}}\rangle_{F^{2}(\mathbb{C}^{\infty},d\lambda_{\infty})}\\
&=\sum_{j=1}^{n}g^{(m)}_j\frac{x_j}{\sqrt{2}}p(x^{(n)})=\sum_{j=1}^{n}g^{(m)}_j\frac{x_j}{\sqrt{2}} \langle p ,K_{x^{(n)}}\rangle_{F^{2}(\mathbb{C}^{\infty},d\lambda_{\infty})}\\
&=\langle p ,\sum_{j=1}^{n}\overline{g^{(m)}_j\frac{x_j}{\sqrt{2}}}K_{x^{(n)}}\rangle_{F^{2}(\mathbb{C}^{\infty},d\lambda_{\infty})},
\end{align*}
which means $(T_{\overline{B_H g^{(m)}}} K_{x^{(n)}})(z)=\sum_{j=1}^{n}\overline{g^{(m)}_j\frac{x_j}{\sqrt{2}}}K_{x^{(n)}}(z)$.
Thus, we have
$$(B_H T_{2}B^{-1}_H K_{x^{(n)}})(z)=\Big(\sum_{j=1}^{n}\overline{g^{(1)}_j\frac{x_j}{\sqrt{2}}}\Big)
\cdots\Big(\sum_{j=1}^{n}\overline{g^{(m)}_j\frac{x_j}{\sqrt{2}}}\Big)K_{x^{(n)}}(z)$$
and
$$(B_H T_{1}B^{-1}_H K_{\widetilde{x}^{(n)}})(z)=\Big(\sum_{j=1}^{n}\overline{f^{(1)}_j\frac{\lambda_jx_j}{\sqrt{2}}}\Big)
\cdots\Big(\sum_{j=1}^{n}\overline{f^{(m)}_j\frac{\lambda_jx_j}{\sqrt{2}}}\Big)K_{\widetilde{x}^{(n)}}(z).$$
Then
\begin{align*}
&\operatorname{Tr}\Big(e^{-\beta d \Gamma(H-\mu I)} a_{+}^*(f^{(1)})\cdots a_{+}^*(f^{(m)}) a_{+}(g^{(1)})\cdots a_{+}(g^{(m)})\Big)\\
=&\lim_{n\rightarrow\infty}\int_{\mathbb{C}^n}\int_{\mathbb{C}^{\infty}}
\prod_{k=1}^m\Big(\sum_{j=1}^{n}\overline{g^{(k)}_j\frac{x_j}{\sqrt{2}}}\Big)K_{x^{(n)}}(z)
\overline{\prod_{k=1}^m\Big(\sum_{j=1}^{n}\overline{f^{(k)}_j\frac{\lambda_jx_j}{\sqrt{2}}}\Big)K_{\widetilde{x}^{(n)}}(z)}
d\lambda_{\infty}(z) d\lambda_{n}(x^{(n)})\\
=&\lim_{n\rightarrow\infty}\int_{\mathbb{C}^n}\prod_{k=1}^m\Big(\sum_{j=1}^{n}f^{(k)}_j\frac{\lambda_jx_j}{\sqrt{2}}\Big)
\overline{\prod_{k=1}^m\Big(\sum_{j=1}^{n}g^{(k)}_j\frac{x_j}{\sqrt{2}}\Big)}\int_{\mathbb{C}^{\infty}} K_{x^{(n)}}(z)\overline{
K_{\widetilde{x}^{(n)}}(z)} d\lambda_{\infty}(z) d\lambda_{n}(x^{(n)})\\
=&\lim_{n\rightarrow\infty}\int_{\mathbb{C}^n}\prod_{k=1}^m\Big(\sum_{j=1}^{n}f^{(k)}_j\frac{\lambda_jx_j}{\sqrt{2}}\Big)
\overline{\prod_{k=1}^m\Big(\sum_{j=1}^{n}g^{(k)}_j\frac{x_j}{\sqrt{2}}\Big)}
e^{\frac{1}{2}\sum_{j=1}^{n}\lambda_j|x_j|^2} d\lambda_{n}(x^{(n)})\\
=&\frac{1}{\prod_{j=1}^{\infty}\sqrt{1-\lambda_j}}
\lim_{n\rightarrow\infty}\int_{\mathbb{C}^n}\prod_{k=1}^m\Big(\sum_{j=1}^{n}\frac{\lambda_j f^{(k)}_j}{\sqrt{1-\lambda_j}}\frac{x_j}{\sqrt{2}}\Big)
\overline{\prod_{k=1}^m\Big(\sum_{j=1}^{n}\frac{g^{(k)}_j}{\sqrt{1-\lambda_j}}\frac{x_j}{\sqrt{2}}\Big)} d\lambda_{n}(x^{(n)}).\\
\end{align*}
By the proof of Corollary \ref{Weylgibbs}, we have
$$
\operatorname{Tr}(e^{-\beta d \Gamma(H-\mu I)})=\frac{1}{\prod_{j=1}^{\infty}\sqrt{1-\lambda_j}},
$$
thus
\begin{align*}
&\omega\Big(a_{+}^*(f^{(1)})\cdots a_{+}^*(f^{(m)}) a_{+}(g^{(1)})\cdots a_{+}(g^{(m)})\Big)\\
=&\lim_{n\rightarrow\infty}\int_{\mathbb{C}^n}\prod_{k=1}^m\Big(\sum_{j=1}^{n}\frac{\lambda_j f^{(k)}_j}{\sqrt{1-\lambda_j}}\frac{x_j}{\sqrt{2}}\Big)
\overline{\prod_{k=1}^m\Big(\sum_{j=1}^{n}\frac{g^{(k)}_j}{\sqrt{1-\lambda_j}}\frac{x_j}{\sqrt{2}}\Big)} d\lambda_{n}(x^{(n)})\\
=&\lim_{n\rightarrow\infty}\int_{\mathbb{C}^{\infty}}\prod_{k=1}^m
\Big(\sum_{j=1}^{n}\frac{\lambda_j f^{(k)}_j}{\sqrt{1-\lambda_j}}e_1\circ l_j(x)\Big)
\overline{\prod_{k=1}^m\Big(\sum_{j=1}^{n}\frac{g^{(k)}_j}{\sqrt{1-\lambda_j}}e_1\circ l_j(x)\Big)} d\lambda_{\infty}(x)\\
=&\lim_{n\rightarrow\infty} \Big\langle\prod_{k=1}^m\sum_{j=1}^{n}\frac{\lambda_j f^{(k)}_j}{\sqrt{1-\lambda_j}}e_1\circ l_j,
 \prod_{k=1}^m\sum_{j=1}^{n}\frac{g^{(k)}_j}{\sqrt{1-\lambda_j}}e_1\circ l_j\Big\rangle_{F^2(\mathbb{C}^{\infty},d\lambda_{\infty})}.
\end{align*}
Let
$$\tilde{f}^{(k)}=\frac{e^{-\beta (H-\mu I)}}{\sqrt{1-e^{-\beta (H-\mu I)}}} f^{(k)}
\text{ and }
\tilde{g}^{(k)}=\frac{1}{\sqrt{1-e^{-(H-\mu I)}}} g^{(k)}.
$$
By functional calculus, we have
$$B_H\widetilde{f}^{(k)}=B_H \frac{e^{-\beta (H-\mu I)}}{\sqrt{1-e^{-\beta (H-\mu I)}}} f^{(k)}
=B_H\sum_{j=1}^{\infty}\frac{\lambda_j f^{(k)}_j}{\sqrt{1-\lambda_j}}v_j
=\sum_{j=1}^{\infty}\frac{\lambda_j f^{(k)}_j}{\sqrt{1-\lambda_j}}e_1\circ l_j $$
and
$$B_H\widetilde{g}^{(k)}=B_H \frac{1}{\sqrt{1-e^{-\beta (H-\mu I)}}} g^{(k)}
=B_H\sum_{j=1}^{\infty}\frac{ g^{(k)}_j}{\sqrt{1-\lambda_j}}v_j
=\sum_{j=1}^{\infty}\frac{ g^{(k)}_j}{\sqrt{1-\lambda_j}}e_1\circ l_j.$$
Let $Q_n$ be a projection on $\chi_1$ such that
$$Q_n\sum_{j=1}^{\infty}a_j e_1\circ l_j=\sum_{j=1}^{n}a_j e_1\circ l_j  $$
for any $\sum_{j}a_j e_1\circ l_j\in \chi_1$, further we denote $\hat{Q}_n=B^{-1}_HQ_nB_H$.
Then, we have
$$\prod_{k=1}^m\sum_{j=1}^{n}\frac{\lambda_j f^{(k)}_j}{\sqrt{1-\lambda_j}}e_1\circ l_j
=\prod_{k=1}^m Q_n\sum_{j=1}^{\infty}\frac{\lambda_j f^{(k)}_j}{\sqrt{1-\lambda_j}}e_1\circ l_j=\prod_{k=1}^mQ_nB_H\widetilde{f}^{(k)}
=\prod_{k=1}^mB_H \hat{Q}_n\widetilde{f}^{(k)}$$
and
$$\prod_{k=1}^m\sum_{j=1}^{n}\frac{ g^{(k)}_j}{\sqrt{1-\lambda_j}}e_1\circ l_j
=\prod_{k=1}^m Q_n\sum_{j=1}^{\infty}\frac{ g^{(k)}_j}{\sqrt{1-\lambda_j}}e_1\circ l_j=\prod_{k=1}^mQ_nB_H\widetilde{g}^{(k)}
=\prod_{k=1}^mB_H \hat{Q}_n\widetilde{g}^{(k)}.$$
Thus
\begin{align*}
&\omega\Big(a_{+}^*(f^{(1)})\cdots a_{+}^*(f^{(m)}) a_{+}(g^{(1)})\cdots a_{+}(g^{(m)})\Big)\\
=&\lim_{n\rightarrow\infty} \Big\langle\prod_{k=1}^m\sum_{j=1}^{n}\frac{\lambda_j f^{(k)}_j}{\sqrt{1-\lambda_j}}e_1\circ l_j,
\prod_{k=1}^m\sum_{j=1}^{n}\frac{g^{(k)}_j}{\sqrt{1-\lambda_j}}e_1\circ l_j\Big\rangle_{F^2(\mathbb{C}^{\infty},d\lambda_{\infty})}\\
=&\lim_{n\rightarrow\infty} \Big\langle\prod_{k=1}^mB_H \hat{Q}_n\widetilde{f}^{(k)},
\prod_{k=1}^m B_H \hat{Q}_n\tilde{g}^{(k)}\Big\rangle_{F^2(\mathbb{C}^{\infty},d\lambda_{\infty})}\\
=&\lim_{n\rightarrow\infty}\Big\langle\sqrt{m !}P_{+}\left(\hat{Q}_n\tilde{f}^{(1)} \otimes \cdots\otimes \hat{Q}_n\tilde{f}^{(m)}\right),
\sqrt{m !}P_{+}\left(\hat{Q}_n\tilde{g}^{(1)} \otimes \cdots\otimes \hat{Q}_n\tilde{g}^{(m)}\right)\Big\rangle_{\mathcal{F}_{+}(\mathcal{H})}\\
=&\Big\langle\sqrt{m !}P_{+}\left(\tilde{f}^{(1)} \otimes \cdots\otimes \tilde{f}^{(m)}\right),
\sqrt{m !}P_{+}\left(\tilde{g}^{(1)} \otimes \cdots\otimes \tilde{g}^{(m)}\right)\Big\rangle_{\mathcal{F}_{+}(\mathcal{H})}.\\
\end{align*}
\end{proof}

\section{Fock-Sobolev spaces and Gaussian Harmonic analysis}
In this section, we introduce the Fock-Sobolev spaces and discuss its relationship with Gaussian Harmonic analysis. In \cite{wu}, the authors used the relationship between the Fock-Sobolev spaces and the Gaussian Harmonic analysis to study the boundedness of a integral operator. We will show that, we have a similar conclusion in the infinite dimensional case. As an application of this relationship, we will study the boundedness of creation operators and annihilation operators.

For any $r\in\mathbb{N}$, the Fock Sobolev space on $\mathbb{C}^n$ consists of all $f\in F^{2}(\mathbb{C}^n)$ such that
\begin{equation}\label{Focksobolevnorm}
\|f\|_{F^{2,r}(\mathbb{C}^{n})}= \sum_{k=1}^{r}\Big(\sum_{|\alpha|=k}\int_{\mathbb{C}^{n}}|\partial^{\alpha}f(z)|^2 d\lambda_{n}(z)\Big)^{1/2}<\infty.
\end{equation}
We need to point out that, in some literature the norm of function in the Fock-Sobolev space is defined by
\begin{equation*}
\|f\|_{*}=\sum_{k=1}^{r}\sum_{|\alpha|=k}\Big(\int_{\mathbb{C}^{n}}|\partial^{\alpha}f(z)|^2 d\lambda_{n}(z)\Big)^{1/2}.
\end{equation*}
$\|f\|_{*}$ and $\|f\|_{F^{2,r}(\mathbb{C}^{n})}$ are equivalent for any $n$ but not uniformly equivalent with respect to $n$. Since we need to generalize the Fock-Sobolev space to the infinite dimensional case, we use (\ref{Focksobolevnorm}).

In the Fock space on $\mathbb{C}^n$, we have the following equality
$$ T_{\overline{e_{1}(z_j)}}=\partial_{z_j}.$$
Since the Fock space on $\mathbb{C}^{\infty}$ is not an analytic function space, so we use   the Toeplitz operator $T_{\overline{e_{1}(z_j)}}$ to define the Fock-Sobolev space on $\mathbb{C}^{\infty}$. Let the Fock-Sobolev space $F^{2,r}(\mathbb{C}^{\infty})$ be the completion of finite polynomials with respect to the following norm
$$\|p\|_{F^{2,r}(\mathbb{C}^{\infty})}= \sum_{k=1}^{r}\Big(\sum_{j_1,\cdots,j_k\geq1}\int_{\mathbb{C}^{\infty}}|T_{\overline{e_{1}(z_{j_1})}}\cdots T_{\overline{e_{1}(z_{j_k})}}p(z)|^2 d\lambda_{\infty}(z)\Big)^{1/2}.$$
To study the Fock-Sobolev space, we introduce the Gaussian Sobolev space.
Let Gaussian measure $d\gamma_n$ on $\mathbb{R}^n$ be given by
$$
d\gamma_n(x)=\frac{1}{{(2\pi)}^{\frac{n}{2}}}e^{-\frac{|x|^2}{2}}dx.
$$
The Gaussian Measure can be extended to $\mathbb{R}^\infty$, we denote it by $d\gamma_{\infty}$.
Let $L^2(\mathbb{R}^\infty,d\gamma_{\infty})$ consist of all square-integrable function on $\mathbb{R}^\infty$ with respect to $d\gamma_{\infty}$.
For any $k=0,1, \ldots,$ the Hermite
polynomials $H_{k}$ on the real line are defined by the formula
$$
H_{k}(x)=\frac{(-1)^{k}}{\sqrt{k !}} \exp \left(\frac{x^{2}}{2}\right) \frac{d^{k}}{d x^{k}} \exp \left(-\frac{x^{2}}{2}\right).
$$
By \cite[Lemma 1.3.2 and Corollary 1.3.3]{Bogachev}, we have
\begin{equation}\label{derivative}
H_{k}^{'}(x)=\sqrt{k}H_{k-1}(x)
\end{equation}
and $\left\{H_{k}\right\}$ is an orthonormal basis in the space $L^{2}\left(\mathbb{R},\gamma_{1}\right)$.
By \cite[Lemma 2.5.1 and Example 2.3.5]{Bogachev}, we have
$$\{H_{\alpha_1}(x_1)H_{\alpha_2}(x_2)\cdots H_{\alpha_n}(x_n): \sum \alpha_{j}=k, \quad k=0,1,2,\cdots\}$$
is a basis of $L^2(\mathbb{C}^\infty,d\lambda_{\infty})$.
Let $I_k$ be the projection from $L^2(\mathbb{C}^\infty,d\lambda_{\infty})$ to the subspace which is generated by
$$\{H_{\alpha_1}(x_1)H_{\alpha_2}(x_2)\cdots H_{\alpha_n}(x_n): \sum \alpha_{j}=k\}.$$
By \cite[Chapter 5]{Bogachev}, the Gaussian Sobolev class is the completion of all finite combination of Hermite polynomials with respect to the norm
$$\|f\|_{W^{2,r}(\mathbb{R}^{\infty})}= \sum_{k=1}^{r}\Big(\sum_{j_1,\cdots,j_k\geq1}\int_{\mathbb{C}^{\infty}}|\partial_{x_{j_1}}\cdots \partial_{x_{j_k}}f(x)|^2 d\gamma_{\infty}(x)\Big)^{1/2}.$$
We also have
\begin{equation}\label{Sobolevnorm}
\|f\|_{W^{2,r}(\mathbb{R}^{\infty})}\simeq\|\sum_{k=0}^{\infty} (1+k)^{r/2}I_{k}f\|_{L^2(\mathbb{R}^\infty,d\gamma_{\infty})}.
\end{equation}
The Gaussian Bargmann transform $G$ is an unitary operator  from $L^2(\mathbb{R}^\infty,d\gamma_{\infty})$ to $F^2(\mathbb{C}^\infty,d\lambda_{\infty})$ such that
$$G[H_{\alpha_1}(x_1)H_{\alpha_2}(x_2)\cdots H_{\alpha_n}(x_n)]=e_{\alpha_1}(z_1)\cdots e_{\alpha_n}(z_n).$$
Let $Q_k$ be the projection from $F^2(\mathbb{C}^\infty,d\lambda_{\infty})$ to $\chi_{k}$. We have $GQ_kG^{-1}=I_k$.
It is elementary that $T_{\overline{e_{1}(z_j)}}e_{\alpha_j}(z_j)=\sqrt{\alpha_j} e_{\alpha_{j-1}}(z_j)$, thus by (\ref{derivative}) and the definition of the Fock-Sobolev space and Gaussian Sobolev class we have
\begin{equation}\label{normequality}
\|p\|_{F^{2,r}(\mathbb{C}^{\infty})}=\|G^{-1}p\|_{W^{2,r}(\mathbb{R}^{\infty})}\simeq\|\sum_{k=0}^{\infty} (1+k)^{r/2}Q_{k}p\|_{F^2(\mathbb{C}^\infty,d\lambda_{\infty})}.
\end{equation}
That is to say the Gaussian Bargmann transform $G$ is also an unitary operator  from $W^{2,r}(\mathbb{R}^{\infty})$ to $F^{2,r}(\mathbb{C}^\infty,d\lambda_{\infty})$.

\begin{proposition}For any $\phi\in \chi_{1}$ and $r\geq1$, $T_{\phi}$ and $T_{\overline{\phi}}$ are bounded from $F^{2,r}(\mathbb{C}^{\infty})$ to $F^{2,r-1}(\mathbb{C}^{\infty})$.
\end{proposition}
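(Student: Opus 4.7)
The plan is to exploit the norm equivalence (\ref{normequality}), which identifies $\|\cdot\|_{F^{2,r}(\mathbb{C}^{\infty})}$ with the weighted $L^2$ norm arising from the spectral decomposition along the levels $\chi_k$. Since the projections $Q_k$ onto $\chi_k$ are mutually orthogonal, this equivalence can be rewritten as
$$\|p\|_{F^{2,r}(\mathbb{C}^{\infty})}^2 \simeq \sum_{k=0}^{\infty}(1+k)^{r}\|Q_k p\|_{F^2(\mathbb{C}^{\infty},d\lambda_{\infty})}^2,$$
and the same for $r-1$ in place of $r$. The problem therefore reduces to understanding how $T_{\phi}$ and $T_{\overline{\phi}}$ act level-by-level.

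Next, I would invoke Proposition \ref{creation}. Since $\phi\in\chi_1$, we can write $\phi=Bh$ for some $h\in\mathcal{H}$, and then $T_\phi = B\,a_+^*(h)\,B^{-1}$ and $T_{\overline{\phi}} = B\,a_+(h)\,B^{-1}$. By the definition of $a_+^*(h)$ and $a_+(h)$ on the Bose-Fock space (and the fact that $B$ intertwines the level decomposition $\otimes^k\mathcal{H}$ with $\chi_k$), $T_\phi$ shifts each $\chi_{k-1}$ into $\chi_k$ and $T_{\overline{\phi}}$ shifts each $\chi_{k+1}$ into $\chi_k$. The classical operator-norm bounds for symmetric Fock-space creators and annihilators, easily recovered from the computation in the proof of Proposition \ref{creation} together with $\|a_+^*(h)|_{P_+(\otimes^k\mathcal{H})}\|\leq \sqrt{k+1}\,\|h\|_{\mathcal{H}}$ and $\|a_+(h)|_{P_+(\otimes^k\mathcal{H})}\|\leq \sqrt{k}\,\|h\|_{\mathcal{H}}$, translate via $B$ into
$$\|T_\phi q\|_{F^2} \leq \sqrt{k+1}\,\|h\|_{\mathcal{H}}\,\|q\|_{F^2} \quad (q\in\chi_k),\qquad \|T_{\overline{\phi}}q\|_{F^2}\leq \sqrt{k}\,\|h\|_{\mathcal{H}}\,\|q\|_{F^2}\quad (q\in\chi_k).$$

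Combining the two ingredients, writing $p_k = Q_k p$ and noting $Q_k T_\phi p = T_\phi p_{k-1}$ and $Q_k T_{\overline{\phi}} p = T_{\overline{\phi}} p_{k+1}$, I would estimate
$$\|T_\phi p\|_{F^{2,r-1}}^2 \simeq \sum_{k\geq 1}(1+k)^{r-1}\|T_\phi p_{k-1}\|_{F^2}^2 \leq \|h\|_{\mathcal{H}}^2 \sum_{k\geq 1}(1+k)^{r-1}k\,\|p_{k-1}\|_{F^2}^2 \lesssim \|h\|_{\mathcal{H}}^2\,\|p\|_{F^{2,r}}^2,$$
and analogously
$$\|T_{\overline{\phi}} p\|_{F^{2,r-1}}^2 \simeq \sum_{k\geq 0}(1+k)^{r-1}\|T_{\overline{\phi}} p_{k+1}\|_{F^2}^2 \leq \|h\|_{\mathcal{H}}^2 \sum_{k\geq 0}(1+k)^{r-1}(k+1)\,\|p_{k+1}\|_{F^2}^2 \lesssim \|h\|_{\mathcal{H}}^2\,\|p\|_{F^{2,r}}^2.$$
Both bounds then extend from finite polynomials to all of $F^{2,r}(\mathbb{C}^\infty)$ by density.

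The only non-routine step is the level-wise norm bound $\sqrt{k+1}\,\|h\|_{\mathcal{H}}$ (respectively $\sqrt{k}\,\|h\|_{\mathcal{H}}$) for creators and annihilators on the Bose-Fock space; if one does not wish to quote it as a standard fact from \cite{Bartteli}, it can be derived directly by polarization from the calculation in Proposition \ref{creation} together with the commutation relation $[a_+(h),a_+^*(h)] = \|h\|_{\mathcal{H}}^2 I$. Everything else is bookkeeping with the orthogonal decomposition $F^2(\mathbb{C}^{\infty},d\lambda_{\infty}) = \bigoplus_k \chi_k$.
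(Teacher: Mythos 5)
Your proof is correct and follows essentially the same route as the paper: both decompose $p$ along the levels $\chi_k$, establish that $T_{\phi}$ (resp.\ $T_{\overline{\phi}}$) maps $\chi_k$ into $\chi_{k+1}$ (resp.\ $\chi_{k-1}$) with operator norm at most $\sqrt{k+1}\,\|\phi\|$ (resp.\ $\sqrt{k}\,\|\phi\|$), and then sum these level-wise bounds against the weights $(1+k)^{r}$ versus $(1+k)^{r-1}$ supplied by the norm equivalence (\ref{normequality}). The only difference is in how the level-wise bound is justified --- the paper computes $T_{\phi}$ directly on the orthonormal basis of $\chi_k$, while you pull the bound back from the standard Bose--Fock creation/annihilation estimates via Proposition \ref{creation}, which if anything is slightly cleaner since it yields the operator norm on all of $\chi_k$ rather than only on basis vectors.
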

\begin{proof}Let $\phi=\sum_{j}c_ke_{1}(z_j)$. For any $e_{\alpha_1}(z_1)\cdots e_{\alpha_n}(z_n)\cdots$ with $\sum \alpha_{j}=k$,
we have
\begin{align*}
&\|T_{\phi}e_{\alpha_1}(z_1)\cdots e_{\alpha_n}(z_n) \|_{F^2(\mathbb{C}^{\infty},d\lambda_{\infty})}^2\\
=&\|\sum_{j}c_je_{1}(z_j)e_{\alpha_1}(z_1)\cdots e_{\alpha_n}(z_n)\cdots \|_{F^2(\mathbb{C}^{\infty},d\lambda_{\infty})}^2\\
=&\|\sum_{j}c_j \sqrt{\alpha_{j}+1}e_{\alpha_1}(z_1)\cdots e_{\alpha_{j+1}}(z_j)\cdots \|_{F^2(\mathbb{C}^{\infty},d\lambda_{\infty})}^2\\
=&\sum_{j}|c_j|^2(\alpha_{j}+1)\leq \|\phi\|^2 (k+1)\|e_{\alpha_1}(z_1)\cdots e_{\alpha_n}(z_n)\|_{F^2(\mathbb{C}^{\infty},d\lambda_{\infty})}^2.
\end{align*}
Thus, for any $p_k\in \chi_k$, we have
$$\|T_{\phi}p_k\|_{F^2(\mathbb{C}^{\infty},d\lambda_{\infty})}^2\leq \|\phi\|_{F^2(\mathbb{C}^{\infty},d\lambda_{\infty})}^2 (k+1)\|p_k\|_{F^2(\mathbb{C}^{\infty},d\lambda_{\infty})}^2$$ and $T_{\phi} p_k\in \chi_{k+1}$.
For any polynomial $p$, we have
\begin{align*}
\|T_{\phi}p\|_{F^{2,r-1}(\mathbb{C}^{\infty})}^2
&=\|\sum_{k} T_{\phi}Q_kp\|_{F^{2,r-1}(\mathbb{C}^{\infty})}^2\\
&\backsimeq\|\sum_{k} (k+2)^{\frac{r-1}{2}}T_{\phi}Q_kp\|_{F^{2}(\mathbb{C}^{\infty},d\lambda_{\infty})}^2\text{ (by (\ref{normequality}))}\\
&=\sum_{k}(k+2)^{r-1}\|T_{\phi}Q_kp\|_{F^{2}(\mathbb{C}^{\infty},d\lambda_{\infty})}^2\\
&\leq \sum_{k}(k+2)^{r-1}\|\phi\|_{F^2(\mathbb{C}^{\infty},d\lambda_{\infty})}^2 (k+1)\|Q_kp\|_{F^2(\mathbb{C}^{\infty},d\lambda_{\infty})}^2\\
&\lesssim \|\phi\|_{F^2(\mathbb{C}^{\infty},d\lambda_{\infty})}^2\sum_{k}(k+1)^{r}\|Q_kp\|_{F^2(\mathbb{C}^{\infty},d\lambda_{\infty})}^2\\
&\backsimeq  \|\phi\|_{F^2(\mathbb{C}^{\infty},d\lambda_{\infty})}^2\|p\|^2_{F^{2,r}(\mathbb{C}^{\infty})}\text{ (by (\ref{normequality}))},
\end{align*}
thus $T_{\phi}$ is bounded from $F^{2,r}(\mathbb{C}^{\infty})$ to $F^{2,r-1}(\mathbb{C}^{\infty})$.
The proof for $T_{\overline{\phi}}$ is similar.
\end{proof}


\bibliographystyle{amsplain}

\end{document}